\DeclareMathOperator{\cov}{cov}
\DeclareMathOperator{\var}{\mathsf{Var}}
\theoremstyle{plain}
\newtheorem{lemma}{Lemma}
\newtheorem{theorem}{Theorem}
\newtheorem{corollary}{Corollary}
\theoremstyle{definition}
\newtheorem{definition}{Definition}
\theoremstyle{remark}
\newtheorem{remark}{Remark}
\begin{document}

\title{Time-Varying Isotropic Vector Random Fields on Compact Two-Point Homogeneous Spaces}

\author{Chunsheng Ma\thanks{Department of Mathematics, Statistics, and Physics, Wichita State University, Wichita, Kansas 67260-0033, USA,
              Tel.: +316-978-3941, Fax: +316-978-3748,
              email: \texttt{chunsheng.ma@wichita.edu}.} \and Anatoliy Malyarenko\thanks{Division of Applied Mathematics, Mälardalen University, SE-721 23 Västerås, Sweden, Tel.: +46-21-10-70-02, Fax: +46-21-10-14-00,
email: \texttt{anatoliy.malyarenko@mdh.se}.}}

\date{\today}

\maketitle

\begin{abstract}
A general form of the covariance matrix function is derived in this paper for a vector random field that is isotropic and mean square continuous on a
compact connected two-point homogeneous space and stationary on a temporal domain. A series representation is presented for such a vector random field, which involve Jacobi polynomials and the distance defined on the  compact two-point homogeneous space.
\end{abstract}

\section{Introduction}\label{sec:intro}

Consider the sphere $\mathbb{S}^d$ embedded into $\mathbb{R}^{d+1}$ as follows: $\mathbb{S}^d=\{\,\mathbf{x}\in\mathbb{R}^{d+1}\colon\|\mathbf{x}\|=1\,\}$, and define the distance between the points $\mathbf{x}_1$ and $\mathbf{x}_2$ by $\rho(\mathbf{x}_1,\mathbf{x}_2)=\cos^{-1}(\mathbf{x}_1^{\top}\mathbf{x}_2)$. With this distance, any isometry between two pairs of points can be extended to an isometry of $\mathbb{S}^d$. A metric space with such a property is called \emph{two-point homogeneous}. A complete classification of \emph{connected and compact} two-point homogeneous spaces is performed in \cite{MR0047345}. Besides spheres, the list includes projective spaces over different algebras; see Section~\ref{sec:2} for details. It turns out that any such space is a \emph{manifold}. We denote it by $\mathbb{M}^d$, where $d$ is the topological dimension of the manifold. Following \cite{MR3736190}, denote by $\mathbb{T}$ either the set $\mathbb{R}$ of real numbers or the set $\mathbb{Z}$ of integers, and call it the \emph{temporal domain}.

Let $(\Omega,\mathfrak{F},\mathsf{P})$ be a probability space.

\begin{definition}
An $\mathbb{R}^m$-valued spatio-temporal random field $\mathbf{Z}(\omega,\mathbf{x},t)\colon\Omega\times\mathbb{M}^d\times\mathbb{T}\to\mathbb{R}^m$ is called (wide-sense) \emph{isotropic} over $\mathbb{M}^d$ and (wide-sense) \emph{stationary} over the temporal domain $\mathbb{T}$, if its mean function $\mathsf{E}[\mathbf{Z}(\mathbf{x}; t)]$ equals a constant vector, and its covariance matrix function
\[
\begin{aligned}
 \cov(\mathbf{Z}(\mathbf{x}_1; t_1), \mathbf{Z}(\mathbf{x}_2; t_2)) & = \mathsf{E}[(\mathbf{Z}(\mathbf{x}_1; t_1)
-\mathsf{E}[\mathbf{Z}(\mathbf{x}_1; t_1)])(\mathbf{Z}(\mathbf{x}_2; t_2)
-\mathsf{E}[\mathbf{Z}(\mathbf{x}_2; t_2)])^{\top}], \\
 &\qquad  \mathbf{x}_1,  \mathbf{x}_2 \in \mathbb{M}^d, t_1,  t_2 \in \mathbb{T},
 \end{aligned}
 \]
  depends only on the time lag $t_2-t_1$ between $t_2$ and $t_1$ and the distance $\rho(\mathbf{x}_1,\mathbf{x}_2)$ between $\mathbf{x}_1$ and $\mathbf{x}_2$.
\end{definition}

As usual, we omit the argument $\omega\in\Omega$ in the notation for the random field under consideration.
In such a case, the covariance matrix function is denoted by $\mathsf{C} ( \rho (\mathbf{x}_1, \mathbf{x}_2); t)$,
\begin{eqnarray*}
 \mathsf{C} (\rho (\mathbf{x}_1, \mathbf{x}_2); t_1-t_2)  & =&
 \mathsf{E}[(\mathbf{Z}(\mathbf{x}_1; t_1)
-\mathsf{E}[\mathbf{Z}(\mathbf{x}_1; t_1)])(\mathbf{Z}(\mathbf{x}_2; t_2)
-\mathsf{E}[\mathbf{Z}(\mathbf{x}_2; t_2)])^{\top}], \\
 &  &   \qquad  \qquad  \mathbf{x}_1,  \mathbf{x}_2 \in \mathbb{M}^d, t_1,  t_2 \in \mathbb{T}.
 \end{eqnarray*}
   It is an $m \times m$ matrix function,  $\mathsf{C} (\rho (\mathbf{x}_1, \mathbf{x}_2); -t) = ( \mathsf{C} (\rho (\mathbf{x}_1, \mathbf{x}_2); t) )^{\top}$, and the inequality
    \[
      \sum_{i=1}^n \sum_{j=1}^n  \mathbf{a}^{\top}_i \mathsf{C} (\rho (\mathbf{x}_i, \mathbf{x}_j); t_i-t_j) \mathbf{a}_j \ge 0
     \]
      holds for every   $n \in \mathbb{N}$,  any $\mathbf{x}_i \in \mathbb{M}^d$, $t_i \in \mathbb{T}$,  and $\mathbf{a}_i \in \mathbb{R}^m$ ($ i =1, 2, \ldots, n$), where $\mathbb{N}$ stands for the set of positive integers, while $\mathbb{N}_0$ denotes the set of nonnegative integers below.
      On the other hand, given an $m \times m$ matrix function with these properties, there exists an $m$-variate Gaussian or elliptically contoured random field   $\{\, \mathbf{Z} (\mathbf{x}; t)\colon \mathbf{x} \in \mathbb{M}^d, t \in \mathbb{T}\, \}$
      with  $\mathsf{C} ( \rho (\mathbf{x}_1, \mathbf{x}_2); t)$ as its covariance matrix function \cite{MR2774237}.

For a scalar and purely spatial  random field $\{\, Z(\mathbf{x})\colon \mathbf{x} \in \mathbb{M}^d\, \}$ that is isotropic and mean square continuous, its covariance function  is continuous and possesses a series representation of the form \cite{MR0005782,MR0215331,MR0005922}
\begin{equation}
      \label{scalar.cov}
      \cov  ( Z (\mathbf{x}_1), Z( \mathbf{x}_2)) =  \sum\limits_{n=0}^\infty  b_n  P_n^{  (\alpha, \beta) } \left( \cos (\rho (\mathbf{x}_1, \mathbf{x}_2)) \right),\qquad\mathbf{x}_1, \mathbf{x}_2 \in \mathbb{M}^d,
      \end{equation}
         where  $\{\, b_n\colon n \in \mathbb{N}_0\, \}$ is a sequence of nonnegative numbers with  $\sum\limits_{n=0}^\infty  b_n  P_n^{  (\alpha, \beta) } (1)$ convergent,
      $P_n^{ (\alpha, \beta)}  (x)$ is a   Jacobi  polynomial of degree $n$ with a pair of parameters $(\alpha, \beta)$ \cite{MR1688958,MR0372517},
     shown in Table~\ref{tab:2} below.
   A general form of the covariance matrix function and
    a series representation  are derived in \cite{MR3736190}  for a vector random field that is isotropic and mean square continuous on a sphere and stationary on a temporal domain. They are extended to $\mathbb{M}^d \times \mathbb{T}$ in this paper.

Isotropic random fields over $\mathbb{S}^d$ with values in $\mathbb{R}^1$ and $\mathbb{C}^1$ were introduced in \cite{obukhov1947statistically}. Theoretical investigations and practical applications of isotropic scalar-valued random fields on spheres may be found in \cite{MR0339308,MR3449810,MR2946126,MR3092051,MR0146880},  and vector- and tensor-valued random fields on spheres have been considered in \cite{MR2870527,MR3488255,MR3736190,MR1213893}, among others. Cosmological applications, in particular, studies of tiny fluctuations of the Cosmic Microwave Background, require development of the theory of \emph{random sections of vector and tensor bundles} over $\mathbb{S}^2$ \cite{MR3170229,MR2737761,MR2884225,MR3746005}. See also surveys of the topic in the monographs \cite{MR2977490,MR2840154,MR697386,MR893393}.
Isotropic random fields on connected compact two-point homogeneous spaces are studied in \cite{MR0423000,MR0215331,MR1712757,MR2110910,MR550252}, among others.

Some important properties of   $\mathbb{M}^d$,  $\rho (\mathbf{x}_1, \mathbf{x}_2)$,    and  $P_n^{(\alpha, \beta)} (x)$ are reviewed in Section~\ref{sec:2} and    two lemmas  are derived, one as a special case of the   Funk--Hecke formula  on $\mathbb{M}^d$ and  the other as a kind of  probability interpretation. A series representation is given in Section~\ref{sec:3} for an  isotropic and mean square continuous vector  random field on  $\mathbb{M}^d$, and a series expression of  its covariance matrix function, in terms of Jacobi polynomials. Section~\ref{sec:4} deals with a spatio-temporal vector random field on $\mathbb{M}^d\times\mathbb{T}$, which is isotropic  and mean square continuous vector  random field on  $\mathbb{M}^d$ and stationary on $\mathbb{T}$, and obtains a series representation for the random field and a general form for its covariance matrix function.
 The lemmas and theorems are proved in Appendix~\ref{sec:proofs}.

\section{Compact Two-Point Homogeneous Spaces and Jacobi Polynomials}\label{sec:2}

This section starts by  recalling some important properties of the compact two-point homogeneous space $\mathbb{M}^d$  and those of Jacobi polynomials, and then establishes   two useful lemmas on a special case of the   Funk--Hecke formula  on $\mathbb{M}^d$ and  its probability interpretation, which are conjectured in \cite{MR3736190}.

The compact connected two-point homogeneous spaces  are shown in  the first column of  Table~\ref{tab:1}. Besides spheres, there are projective spaces over the fields $\mathbb{R}$ and $\mathbb{C}$, over the skew field $\mathbb{H}$ of quaternions, and over the algebra $\mathbb{O}$ of octonions. The possible values of $d$ are chosen in such a way that all the spaces in Table~\ref{tab:1} are different and exhaust the list. In the lowest dimensions we have $\mathbb{P}^1(\mathbb{R})=\mathbb{S}^1$, $\mathbb{P}^2(\mathbb{C})=\mathbb{S}^2$, $\mathbb{P}^4(\mathbb{H})=\mathbb{S}^4$, and $\mathbb{P}^8(\mathbb{O})=\mathbb{S}^8$.

\begin{table}
\caption{An approach based on Lie algebras}
\label{tab:1}       
\begin{tabular}{llllll}
\hline\noalign{\smallskip}
 $\mathbb{M}^d$ & $G$ & $K$ & $p$ & $q$ & Zonal function \\
\noalign{\smallskip}\hline\noalign{\smallskip}
$\mathbb{S}^d$, $d=1$, $2$, \dots & $\mathrm{SO}(d+1)$ & $\mathrm{SO}(d)$ & $0$ & $d-1$ & $R^{(\alpha,\beta)}_{n}(\cos(\rho(\mathbf{x},\mathbf{o})))$ \\
    $ \mathbb{P}^d(\mathbb{R})$, $d=2$, $3$, \dots & $\mathrm{SO}(d+1)$ & $\mathrm{O}(d)$ & $0$ & $d-1$ & $R^{(\alpha,\beta)}_{2n}(\cos(\rho(\mathbf{x},\mathbf{o})/2))$ \\
    $ \mathbb{P}^d(\mathbb{C})$, $d=4$, $6$, \dots & $\mathrm{SU}(\frac{d}{2}+1)$ & $\mathrm{S}(\mathrm{U}(\frac{d}{2})\times\mathrm{U}(1))$ & $d-2$ & $1$ & $R^{(\alpha,\beta)}_{n}(\cos(\rho(\mathbf{x},\mathbf{o})))$ \\
    $ \mathbb{P}^d(\mathbb{H})$, $d=8$, $12$, \dots & $\mathrm{Sp}(\frac{d}{4}+1)$ & $\mathrm{Sp}(\frac{d}{4})\times\mathrm{Sp}(1)$ & $d-4$ & $3$ & $R^{(\alpha,\beta)}_{n}(\cos(\rho(\mathbf{x},\mathbf{o})))$ \\
    $ \mathbb{P}^{16}(\mathbb{O})$ & $\mathrm{F}_{4(-52)}$ & $\mathrm{Spin}(9)$ & $8$ & $7$ & $R^{(\alpha,\beta)}_{n}(\cos(\rho(\mathbf{x},\mathbf{o})))$ \\
\noalign{\smallskip}\hline
\end{tabular}
\end{table}

All compact two-point homogeneous spaces share the same property \cite{MR496885} that all of their geodesic lines are closed. Moreover, all of them are circles and have the same length. In particular, when the sphere $\mathbb{S}^d$ is embedded into the space $\mathbb{R}^{d+1}$ as described in Section~\ref{sec:intro}, the length of any geodesic line is equal to that of the unit circle, that is, $2\pi$. It is natural no norm the distance in such a way that the length of any geodesic line is equal to $2\pi$, exactly as in the case of the unit sphere.

There are at least two different approaches to the subject of compact two-point homogeneous spaces  in the literature. Two of them are reviewed in the next two subsections.

\subsection{An approach based on Lie algebras}\label{sub:Lie}

This approach goes back to  Cartan \cite{MR1509283}. It has been used in both probabilistic literature \cite{MR0215331} and  approximation theory literature \cite{MR3722488}.

Let $G$ be the connected component of the group of isometries of $\mathbb{M}^d$, and let $K$ be the stationary subgroup of a fixed point in $\mathbb{M}^d$, call it $\mathbf{o}$.  Cartan \cite{MR1509283} defined and calculated the numbers $p$ and $q$, which are dimensions of some root spaces connected with the Lie algebras of the groups $G$ and $K$. The groups $G$ and $K$ are listed in the second and the third columns of Table~\ref{tab:1}, while the numbers $p$ and $q$ are listed in the fourth and fifth columns of the table.

By \cite[Theorem~11]{MR0117681}, if $\mathbb{M}^d$ is a two-point homogeneous space, then the only differential operators on $\mathbb{M}^d$ that are invariant under all isometries of $\mathbb{M}^d$ are the polynomials in a special differential operator $\Delta$ called the \emph{Laplace--Beltrami operator}. Let $\mathrm{d}\nu(\mathbf{x})$ be the measure which is induced on the homogeneous space $\mathbb{M}^d=G/K$ by the \emph{probabilistic} invariant measure on $G$. It is possible to define $\Delta$ as a self-adjoint operator in the space $H=L^2(\mathbb{M}^d,\mathrm{d}\nu(\mathbf{x}))$. The spectrum of $\Delta$ is discrete and the eigenvalues are
\[
\lambda_{n}=-\varepsilon n(\varepsilon n+\alpha+\beta+1), ~~~~~~ n \in \mathbb{N}_0,
\]
where
\begin{equation}\label{eq:3}
\alpha=(p+q-1)/2,\qquad\beta=(q-1)/2,
\end{equation}
and where $\varepsilon=2$ if $\mathbb{M}^d= \mathbb{P}^d(\mathbb{R})$ and $\varepsilon=1$ otherwise.

Let $H_{n}$ be the eigenspace of $\Delta$ corresponding to $\lambda_{n}$. The space $H$ is the Hilbert direct sum of its subspaces $H_{n}$, $n\in\mathbb{N}_0$. The space $H_n$ is finite-dimensional with
\[
\dim H_n= \frac{(2n+\alpha+\beta+1)\Gamma(\beta+1)\Gamma(n+\alpha+\beta+1)\Gamma(n+\alpha+1)}
{\Gamma(\alpha+1)\Gamma(\alpha+\beta+2)\Gamma(n+1)\Gamma(n+\beta+1)}.
\]
Each of the spaces $H_{n}$ contains a unique one-dimensional subspace whose elements are \emph{$K$-spherical functions}; that is, functions invariant under the action of $K$ on $\mathbb{M}^d$. Such a function, say $f_{n}(\mathbf{x})$, depends only on the distance $r=\rho(\mathbf{x},\mathbf{o})$, $f_{n}(\mathbf{x})=f^*_{n}(r)$. A spherical function is called \emph{zonal} if $f^*_{n}(0)=1$.

The zonal spherical functions of all compact connected two-point homogeneous spaces are listed in the last column of Table~\ref{tab:1}. To explain notation, we recall that the \emph{Jacobi polynomials}
\[
\begin{aligned}
&P_n^{(\alpha, \beta)} (x) = \frac{\Gamma (\alpha+n+1)}{n! \Gamma (\alpha+\beta+n+1)}\sum_{k=0}^n\binom{n}{k}\frac{\Gamma (\alpha+\beta+n+k+1)}{\Gamma ( \alpha+k+1 )} \left(\frac{x-1}{2} \right)^k,\\ &\quad x \in [-1,1],\quad n \in \mathbb{N}_0,
\end{aligned}
\]
are the eigenfunctions of the \emph{Jacobi operator} \cite[Theorem~4.2.1]{MR0372517}
\[
\Delta_x=\frac{1}{(1-x)^{\alpha}(1+x)^{\beta}}\frac{\mathrm{d}}{\mathrm{d}x}
\left((1-x)^{\alpha+1}(1+x)^{\beta+1}\frac{\mathrm{d}}{\mathrm{d}x}\right).
\]
In the last column of Table~\ref{tab:1},  the \emph{normalised Jacobi polynomials} are introduced,
\[
R^{(\alpha,\beta)}_{n}(x)=\frac{P^{(\alpha,\beta)}_{n}(x)}{P^{(\alpha,\beta)}_{n}(1)},   \qquad n \in \mathbb{N}_0,
\]
where
  \begin{equation}\label{eq:4}
P^{(\alpha,\beta)}_{n}(1)=\frac{\Gamma(n+\alpha+1)}{\Gamma(n+1)\Gamma(\alpha+1)}.
\end{equation}

The reason for the exceptional behaviour of the real projective spaces is as follows;  see \cite{MR0215331,MR1954548}. The space $ \mathbb{P}^d(\mathbb{R})$ may be constructed by identification of antipodal points on the sphere $\mathbb{S}^d$. An $\mathrm{O}(d)$-invariant function $f$ on $ \mathbb{P}^d(\mathbb{R})$ can be lifted to an $\mathrm{SO}(d)$-invariant function $g$ on $\mathbb{S}^d$ by $g(\mathbf{x})=f(\pi(\mathbf{x}))$, where $\pi$ maps a point $\mathbf{x}\in\mathbb{S}^d$ to the pair of antipodal points $\pi(\mathbf{x})\in   \mathbb{P}^d(\mathbb{R})$. This simply means that a function on $[0,1]$ can be extended to an even function on $[-1,1]$. Only the even polynomials can be functions on the so constructed manifold. By \cite[Equation~(4.1.3)]{MR0372517}, we have
\[
P^{(\alpha,\beta)}_{n}(x)=(-1)^{n}P^{(\beta,\alpha)}_{n}(-x).
\]
For the real projective spaces $\alpha=\beta$, and the corresponding normalised Jacobi polynomials are even if and only if $n$ is even.

\begin{remark}
If two Lie groups have the same connected component of identity, then they have the same Lie algebra. For example, the group $\mathrm{SO}(d)$ and $\mathrm{O}(d)$ have the same Lie algebra $\mathfrak{so}(d)$. That is, the approach based on Lie algebras gives the same values of $p$ and $q$ for spheres and real projective spaces of equal dimensions. Only zonal spherical functions can distinguish between the two cases.

In the only case of $\mathbb{M}^d=\mathbb{S}^1$, we have $p=q=0$. The reason is that only in this case the Lie algebra $\mathfrak{so}(2)$ is commutative rather than semisimple, and does not have nonzero root spaces at all.
\end{remark}

\subsection{A geometric approach}

There is a trick that allows us  to write down \emph{all} zonal spherical functions of \emph{all} compact two-point homogeneous spaces in the same form,
 which is used in probabilistic literature   \cite{MR0423000,MR2977490,MR1712757,MR2110910,MR550252} and in approximation theory  \cite{MR2119285,MR3748139}. Denote $y=\cos(\rho(\mathbf{x},\mathbf{o})/2)$. Then we have $\cos(\rho(\mathbf{x},\mathbf{o}))=2y^2-1$. For the case of $\mathbb{M}^d= \mathbb{P}^d(\mathbb{R})$, $\alpha=\beta=(d-2)/2$. By \cite[Theorem~4.1]{MR0372517},
\[
P^{(\alpha,\alpha)}_{2n}(y)=\frac{\Gamma(2n+\alpha+1)\Gamma(n+1)}
{\Gamma(n+\alpha+1)\Gamma(2 n+1)}P^{(\alpha,-1/2)}_{n}(2y^2-1).
\]
In terms of the normalised Jacobi polynomials we obtain
\[
R^{(\alpha,\alpha)}_{2n}(\cos(\rho(\mathbf{x},\mathbf{o})/2))
=R^{(\alpha,-1/2)}_{n}(\cos(\rho(\mathbf{x},\mathbf{o}))).
\]
For the case of $\mathbb{M}^d= \mathbb{P}^d(\mathbb{R})$, if we redefine $\alpha=(d-2)/2$, $\beta=-1/2$, then, \emph{all} zonal spherical functions of \emph{all} compact two-point homogeneous spaces are given by the same expression $R^{(\alpha,\beta)}_{n}(\cos(\rho(\mathbf{x},\mathbf{o})))$.

It easily follows from \eqref{eq:3} that the new values for $p$ and $q$ in the case of $\mathbb{M}^d=P^d(\mathbb{R})$ are $p=d-1$ and $q=0$. It is interesting to note that the new values of $p$ and $q$ for the real projective spaces together with their old values for the rest of spaces still have a meaning;  see \cite{MR3748139} and Table~\ref{tab:2}. This time, the values of $p$ and $q$ are connected with the \emph{geometry} of the space $\mathbb{M}^d$ rather than with Lie algebras.

\begin{table}
\caption{A geometric approach}
\label{tab:2}       
\begin{tabular}{lllllllll}
\hline\noalign{\smallskip}
 $\mathbb{M}^d$ & $p$ & $q$ & $\alpha$ & $\beta$ & $\mathbb{A}$ & $i(\mathbb{M}^d)$ \\
\noalign{\smallskip}\hline\noalign{\smallskip}
$\mathbb{S}^d$, $d=1$, $2$, \dots & $0$ & $d-1$ & $\frac{d-2}{2}$ & $\frac{d-2}{2}$ & $\mathbb{S}^0$ & $1$ \\
    $ \mathbb{P}^d(\mathbb{R})$, $d=2$, $3$, \dots & $d-1$ & $0$ & $\frac{d-2}{2}$ & $-\frac{1}{2}$ & $ \mathbb{P}^{d-1}(\mathbb{R})$ & $2^{d-1}$ \\
    $ \mathbb{P}^d(\mathbb{C})$, $d=4$, $6$, \dots & $d-2$ & $1$ & $\frac{d-2}{2}$ & $0$ & $ \mathbb{P}^{d-2}(\mathbb{C})$ & $\binom{d-1}{d/2-1}$ \\
    $  \mathbb{P}^d(\mathbb{H})$, $d=8$, $12$, \dots & $d-4$ & $3$ & $\frac{d-2}{2}$ & $1$ & $ \mathbb{P}^{d-4}(\mathbb{H})$ & $\frac{1}{d/2+1}\binom{d-1}{d/2-1}$ \\
    $  \mathbb{P}^{16}(\mathbb{O})$ & $8$ & $7$ & $7$ & $3$ & $ \mathbb{P}^{8}(\mathbb{O})$ & $39$ \\
\noalign{\smallskip}\hline
\end{tabular}
\end{table}

Specifically, let $\mathbb{A}=\{\,\mathbf{x}\in\mathbb{M}^d\colon\rho(\mathbf{x},\mathbf{o})=\pi\,\}$. This set is called the \emph{antipodal manifold} of the point $\mathbf{o}$. The antipodal manifolds are listed in the last column of Table~\ref{tab:2}. Geometrically, if $\mathbb{M}^d=\mathbb{S}^d$ and $\mathbf{o}$ is the North pole, then $\mathbb{A}=\mathbb{S}^0$ is the South pole. Otherwise, $\mathbb{A}$ is the \emph{space at infinity} of the point $\mathbf{o}$ in the terms of projective geometry. The new number $p$ turns out to be the \emph{dimension of the antipodal manifold}, while the number $p+q+1$ is, as before, the dimension of the space $\mathbb{M}^d$ itself.

In what follows, we use the geometric approach.
It turns out that all the spaces $\mathbb{M}^d$ are \emph{Riemannian manifolds}, as is defined in \cite{MR0282313}. Each Riemannian manifold carries the \emph{canonical measure} $\mu$; see \cite[p.~10--11]{MR0282313}. The measure $\mu$ is proportional to the measure $\nu$ constructed in Subsection~\ref{sub:Lie}. The coefficient of proportionality, or the total measure $\mu(\mathbb{M}^d)$ of the compact manifold $\mathbb{M}^d$ is called the \emph{volume} of $\mathbb{M}^d$.

\begin{lemma}\label{lem:3}
The volume of the space $\mathbb{M}^d$ is
\begin{equation}\label{eq:7}
\omega_d=\mu(\mathbb{M}^d)=\frac{(4\pi)^{\alpha+1}\Gamma(\beta+1)}{\Gamma(\alpha+\beta+2)}.
\end{equation}
\end{lemma}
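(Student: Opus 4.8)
The plan is to compute $\omega_d=\mu(\mathbb{M}^d)$ by integrating the canonical Riemannian measure in geodesic polar coordinates centred at $\mathbf{o}$. Because $\mathbb{M}^d$ is two-point homogeneous, the stabiliser $K$ acts transitively on each geodesic sphere $\{\mathbf{x}\colon\rho(\mathbf{x},\mathbf{o})=r\}$, so after integrating out the angular variables the measure disintegrates as $\mathrm{d}\mu=S(r)\,\mathrm{d}r$, where $S(r)$ is the total area of the geodesic sphere of radius $r$ and $r$ ranges over $[0,\pi]$ (the diameter, half the common geodesic length $2\pi$). Thus $\omega_d=\int_0^\pi S(r)\,\mathrm{d}r$, and everything reduces to identifying $S(r)$ explicitly and then fixing one multiplicative constant.

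First I would establish that $S(r)=c\,(\sin(r/2))^{p+q}(\cos(r/2))^q$ for some positive constant $c$. There are two routes. Geometrically, this is the standard volume density of a rank-one symmetric space of compact type, in which $p$ and $q$ are precisely the multiplicities of the two relevant roots; this is why the same $p,q$ recorded in Table~\ref{tab:1} govern the density. Alternatively, and staying inside the framework already set up, one argues that the push-forward of $\mu$ under $\mathbf{x}\mapsto\rho(\mathbf{x},\mathbf{o})$ must render the zonal spherical functions $R^{(\alpha,\beta)}_n(\cos r)$ mutually orthogonal, since they lie in the pairwise orthogonal eigenspaces $H_n$. Writing $x=\cos r$ turns this into the assertion that $\{P^{(\alpha,\beta)}_n(x)\}$ is orthogonal with respect to the push-forward measure on $[-1,1]$; by determinacy of the moment problem on a compact interval, that measure is proportional to the Jacobi weight $(1-x)^\alpha(1+x)^\beta\,\mathrm{d}x$. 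Undoing the substitution, with $2\alpha+1=p+q$ and $2\beta+1=q$ from \eqref{eq:3}, recovers exactly the stated shape.

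Next I would fix the constant $c$ by the small-radius limit. In geodesic normal coordinates about $\mathbf{o}$ the metric is Euclidean to leading order, so $S(r)\sim A_{d-1}\,r^{d-1}$ as $r\to0$, where $A_{d-1}=2\pi^{d/2}/\Gamma(d/2)$ is the surface area of the unit sphere in $\mathbb{R}^{d}$; this leading constant is independent of the global scale of the metric, so the normalisation that geodesics have length $2\pi$ enters only through the half-angle arguments $r/2$ and the range $[0,\pi]$. Comparing with $S(r)\sim c\,(r/2)^{p+q}=c\,2^{-(d-1)}r^{d-1}$ and using $p+q=d-1$ gives $c=2^{d-1}A_{d-1}=2^d\pi^{d/2}/\Gamma(d/2)$.

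Finally I would evaluate $\omega_d=c\int_0^\pi(\sin(r/2))^{p+q}(\cos(r/2))^q\,\mathrm{d}r$. Substituting $u=r/2$ and invoking the Beta integral $\int_0^{\pi/2}(\sin u)^{2a-1}(\cos u)^{2b-1}\,\mathrm{d}u=\Gamma(a)\Gamma(b)/(2\Gamma(a+b))$ with $a=\alpha+1=(p+q+1)/2$ and $b=\beta+1=(q+1)/2$ yields $\omega_d=c\,\Gamma(\alpha+1)\Gamma(\beta+1)/\Gamma(\alpha+\beta+2)$. Since $\alpha+1=d/2$, the factor $\Gamma(\alpha+1)=\Gamma(d/2)$ cancels the $\Gamma(d/2)$ in $c$, and $2^d\pi^{d/2}=(4\pi)^{d/2}=(4\pi)^{\alpha+1}$, which produces the claimed formula \eqref{eq:7}. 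I expect the main obstacle to be the second step: verifying that the single expression $(\sin(r/2))^{p+q}(\cos(r/2))^q$ governs the density uniformly across all five families (spheres and the four projective spaces), together with making the normalisation in the third step rigorous so that $c$ is pinned down unambiguously; once these are settled, the remainder is a routine Beta-integral evaluation.
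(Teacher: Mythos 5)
Your argument is correct, and the final bookkeeping checks out (for $\mathbb{S}^d$ it reproduces $2\pi^{(d+1)/2}/\Gamma((d+1)/2)$, and in general $c\,\Gamma(\alpha+1)\Gamma(\beta+1)/\Gamma(\alpha+\beta+2)$ with $c=2^d\pi^{d/2}/\Gamma(d/2)$ collapses to \eqref{eq:7}), but it is a genuinely different route from the paper's. The paper does not integrate anything: it invokes Weinstein's theorem \cite{MR0390968} that $\mu(\mathbb{M}^d)/\mu(\mathbb{S}^d)$ is an integer for a manifold all of whose geodesics are closed of length $2\pi$, takes the closed-form expression for that integer $i(\mathbb{M}^d)$ from \cite{MR776403} in terms of $d$ and the index $e$, verifies $d=2\alpha+2$, $e=2\beta+2$, and multiplies by the known volume of $\mathbb{S}^d$. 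Your proof instead computes the volume directly in geodesic polar coordinates, identifying the radial density $(\sin(r/2))^{p+q}(\cos(r/2))^q$ and normalising it by the Euclidean small-radius asymptotics before evaluating a Beta integral. Each approach outsources exactly one nontrivial geometric fact: the paper, the formula for the Weinstein integer; you, the shape of the density. On the latter, your moment-problem argument is sound given that Table~\ref{tab:1} (in the geometric parametrisation of Table~\ref{tab:2}) is taken as known --- orthogonality of the eigenspaces $H_n$ forces $\int P_n^{(\alpha,\beta)}\,\mathrm{d}\mu_*=0$ for $n\ge1$, which determines the push-forward measure up to scale on $[-1,1]$ --- though be aware this inverts the usual logical order, since the zonal functions are themselves typically derived from the radial density; it is non-circular only relative to the paper's imported prerequisites. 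A shortcut worth noting: the push-forward you need is exactly the weight $\mathrm{d}\nu_{\alpha,\beta}$ of \eqref{eq:1} appearing in the Funk--Hecke theorem of \cite{MR3722488}, which the paper already uses to prove Lemma~\ref{lem:1}, so you could cite that to pin down the density up to the constant and keep only the small-$r$ normalisation and the Beta integral as your own computation. A minor advantage of your route is that it yields the geodesic-sphere area $S(r)$ as a by-product, not just the total volume; the paper's route is shorter and explains the integrality of the ratios $i(\mathbb{M}^d)$ recorded in Table~\ref{tab:2}.
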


In what follows, we write just $\mathrm{d}\mathbf{x}$ instead of $\mathrm{d}\mu(\mathbf{x})$.

\subsection{Orthogonality properties of Jacobi polynomials}

The set of Jacobi polynomials $\{\, P_n^{(\alpha, \beta)} (x)\colon n \in \mathbb{N}_0, x \in \mathbb{R}\, \}$
         possesses two types of  orthogonality  properties. First, for each pair of $\alpha>-1$ and $\beta>-1$, this set is a complete orthogonal system on  the interval $[-1, 1]$ with respect to the weight
         function $(1-x)^\alpha (1+x)^\beta$, in the sense that
           \begin{equation}
            \label{Gegenbauer.orth}
             \int_{-1}^1 P^{(\alpha, \beta)}_i (x) P^{(\alpha, \beta)}_j (x)  (1-x)^\alpha (1+x)^\beta  d x
                =  \left\{
                \begin{array}{ll}
                \frac{2^{\alpha+\beta+1} }{2 j +\alpha+\beta+1} \frac{\Gamma (j+\alpha+1) \Gamma (j+\beta+1)}{ j! \Gamma ( j +\alpha+\beta+1) },
                 ~   &  ~ i =j, \\
                 0, ~ & ~ i \neq j.
                 \end{array}   \right.
                \end{equation}

Second, for \emph{selected values} of $\alpha$ and $\beta$ given by \eqref{eq:3} with $p$ and $q$ given in Table~\ref{tab:2}, they are  orthogonal  over     $\mathbb{M}^d$, as the following lemma describes, which
              is derived from the   Funk--Hecke formula  recently established in \cite{MR3722488}.  In the particular case $\mathbb{M}^d=\mathbb{S}^d$,  the   Funk--Hecke formula  may be found in classical references such as \cite{MR1688958,MR1483320}.

\begin{lemma}\label{lem:1}
For $i, j \in \mathbb{N}_0$,  and $\mathbf{x}_1$, $\mathbf{x}_2 \in \mathbb{M}^d$,
\[
 \int_{\mathbb{M}^d } P_i^{(\alpha,\beta) } (\cos(\rho(\mathbf{x}_1,\mathbf{x})))
P_j^{(\alpha,\beta)} (\cos(\rho(\mathbf{x}_2,\mathbf{x})))\,\mathrm{d}\mathbf{x}
 =
      \frac{\delta_{ij}\omega_d}{a_i^2} P_i^{(\alpha,\beta)} (\cos(\rho(\mathbf{x}_1,\mathbf{x}_2))),
      \]
where
\begin{equation}
\label{an.def}
a_n=\left(\frac{\Gamma(\beta+1)(2 n +\alpha+\beta+1)\Gamma(n+\alpha+\beta+1)}
{\Gamma(\alpha+\beta+2)\Gamma(n+\beta+1)}\right)^{\frac{1}{2}},\qquad n \in \mathbb{N}_0.
\end{equation}
\end{lemma}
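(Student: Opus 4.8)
The plan is to derive the identity directly from the Funk--Hecke formula of \cite{MR3722488}, applied with the kernel $f(x)=P_i^{(\alpha,\beta)}(x)$. First I would rewrite the second factor in terms of the \emph{normalised} zonal function, $P_j^{(\alpha,\beta)}(\cos(\rho(\mathbf{x}_2,\mathbf{x})))=P_j^{(\alpha,\beta)}(1)\,R_j^{(\alpha,\beta)}(\cos(\rho(\mathbf{x}_2,\mathbf{x})))$, so that the integrand becomes a zonal kernel integrated against the zonal spherical function of degree $j$ based at $\mathbf{x}_2$. Since $R_j^{(\alpha,\beta)}(\cos(\rho(\cdot,\mathbf{x}_2)))$ is, by the $G$-invariance of the eigenspaces, the zonal element of $H_j$ based at $\mathbf{x}_2$, the Funk--Hecke formula reproduces it, yielding
\[
\int_{\mathbb{M}^d} P_i^{(\alpha,\beta)}(\cos(\rho(\mathbf{x}_1,\mathbf{x})))\,R_j^{(\alpha,\beta)}(\cos(\rho(\mathbf{x}_2,\mathbf{x})))\,\mathrm{d}\mathbf{x} = c_j\,R_j^{(\alpha,\beta)}(\cos(\rho(\mathbf{x}_1,\mathbf{x}_2))),
\]
with a multiplier $c_j$ independent of $\mathbf{x}_1,\mathbf{x}_2$.

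Next I would identify $c_j$ as a Fourier--Jacobi coefficient of $f$. Setting $\mathbf{x}_1=\mathbf{x}_2=\mathbf{o}$ and using $R_j^{(\alpha,\beta)}(1)=1$ collapses the left-hand side to $c_j$; by isotropy the surface integral reduces to a one-dimensional integral against the Jacobi weight, that is, for integrable $g$ one has $\int_{\mathbb{M}^d} g(\cos(\rho(\mathbf{x},\mathbf{o})))\,\mathrm{d}\mathbf{x}=C_d\int_{-1}^1 g(t)(1-t)^\alpha(1+t)^\beta\,\mathrm{d}t$. The constant $C_d$ is pinned down by taking $g\equiv1$: comparing the volume $\omega_d$ from \eqref{eq:7} with the Beta integral $\int_{-1}^1(1-t)^\alpha(1+t)^\beta\,\mathrm{d}t=2^{\alpha+\beta+1}\Gamma(\alpha+1)\Gamma(\beta+1)/\Gamma(\alpha+\beta+2)$ gives $C_d=\omega_d\,\Gamma(\alpha+\beta+2)/(2^{\alpha+\beta+1}\Gamma(\alpha+1)\Gamma(\beta+1))$. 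Hence $c_j=C_d\int_{-1}^1 P_i^{(\alpha,\beta)}(t)\,R_j^{(\alpha,\beta)}(t)(1-t)^\alpha(1+t)^\beta\,\mathrm{d}t$.

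The orthogonality relation \eqref{Gegenbauer.orth} then finishes the argument. The integral defining $c_j$ vanishes whenever $i\neq j$, which produces the Kronecker delta. For $i=j$ it equals $1/P_i^{(\alpha,\beta)}(1)$ times the squared norm in \eqref{Gegenbauer.orth}; multiplying back by the factor $P_i^{(\alpha,\beta)}(1)$ that was pulled out earlier converts $R_i^{(\alpha,\beta)}$ into $P_i^{(\alpha,\beta)}$ on both sides, so the right-hand side becomes $c_i\,P_i^{(\alpha,\beta)}(\cos(\rho(\mathbf{x}_1,\mathbf{x}_2)))$. It then remains to check that the accumulated constant simplifies to $\omega_d/a_i^2$: inserting $P_i^{(\alpha,\beta)}(1)$ from \eqref{eq:4} and cancelling the common $\Gamma$-factors, the constant collapses to $\omega_d\,\Gamma(\alpha+\beta+2)\Gamma(i+\beta+1)/[\Gamma(\beta+1)(2i+\alpha+\beta+1)\Gamma(i+\alpha+\beta+1)]$, which is exactly $\omega_d/a_i^2$ by the definition \eqref{an.def} of $a_n$.

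The main obstacle I anticipate is bookkeeping rather than conceptual: fixing the Funk--Hecke multiplier against the correct measure (which is why Lemma~\ref{lem:3} and the pushforward constant $C_d$ are needed) and then verifying that the cascade of Gamma functions telescopes precisely to $a_i^2$. One subtlety worth flagging is the real projective case, where the geometric convention replaces $\beta=(q-1)/2$ by $\beta=-1/2$; because in the geometric approach all zonal spherical functions are written uniformly as $R_n^{(\alpha,\beta)}(\cos\rho)$ with the parameters of Table~\ref{tab:2}, the single argument above covers every $\mathbb{M}^d$ at once, with no separate treatment required.
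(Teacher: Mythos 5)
Your proposal is correct and follows essentially the same route as the paper: both apply the Funk--Hecke formula of \cite{MR3722488} with kernel $P_i^{(\alpha,\beta)}$ acting on the degree-$j$ zonal function based at $\mathbf{x}_2$, reduce the multiplier to a one-dimensional integral against the Jacobi weight, and conclude via the orthogonality relation \eqref{Gegenbauer.orth}; your constant $C_d$ is just the normalisation of the probabilistic measure $\nu_{\alpha,\beta}$ that the paper quotes explicitly in \eqref{eq:1}, and your final Gamma-function simplification to $\omega_d/a_i^2$ checks out.
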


The probabilistic interpretation of zonal spherical functions on $\mathbb{M}^d$ is provided in Lemma~\ref{lem:2} below. The spherical case is given in \cite{MR3488255}.

\begin{definition}
A random vector $\mathbf{U}$   is said to be \emph{uniformly distributed} on  $\mathbb{M}^d$ if, for every Borel set $A\subseteq\mathbb{M}^d$ and every isometry $g$ we have $\mathsf{P} (\mathbf{U}\in A ) =\mathsf{P} (\mathbf{U}\in gA)$.
\end{definition}

To construct $\mathbf{U}$, we start with a measure $\sigma$ proportional to the invariant measure $\nu$ of Subsection~\ref{sub:Lie}. Let $T_{\mathbf{o}}$ be the tangent space to $\mathbb{M}^d$ at the point $\mathbf{o}$. Choose a Cartesian coordinate system in $T_{\mathbf{o}}$ and identify this space with the space $\mathbb{R}^{d+1}$. Construct a chart $\varphi\colon\mathbb{M}^d\setminus\mathbb{A}\to\mathbb{R}^{d+1}$ as follows. Put $\varphi(\mathbf{o})=\mathbf{0}\in\mathbb{R}^d$. For any other point $\mathbf{x}\in\mathbb{M}^d\setminus\mathbb{A}$, draw the unique geodesic line connecting $\mathbf{o}$ and $\mathbf{x}$. Let $\mathbf{r}\in\mathbb{R}^{d+1}$ be the unit tangent vector to the above geodesic line. Define
\[
\varphi(\mathbf{x})= \mathbf{r} \tan(\rho(\mathbf{x},\mathbf{o})/2),
\]
and, for each Borel set $B\subseteq\mathbb{M}^d$,
\[
\sigma(B)=\int_{\varphi^{-1}(B\setminus\mathbb{A})}\frac{\mathrm{d}\mathbf{x}}
{(1+\|\mathbf{x}\|^2)^{\alpha+\beta+2}}.
\]
This measure is indeed invariant \cite[p.~113]{MR3024377}.
Finally, define a probability space $(\Omega', $  $ \mathfrak{F}', $  $\mathsf{P}')$ as follows: $\Omega'=\mathbb{M}^d$, $\mathfrak{F}'$ is the $\sigma$-field of Borel subsets of $\Omega'$, and
\[
\mathsf{P}'(B)=\frac{\sigma(B)}{\sigma(\mathbb{M}^d)},\qquad B\in\mathfrak{B}'.
\]
The random variable $\mathbf{U}(\omega)=\omega$ is then uniformly distributed  on $\mathbb{M}^d$.

\begin{lemma}\label{lem:2}
Let $\mathbf{U}$ be a random vector uniformly distributed on $\mathbb{M}^d$. For $n \in \mathbb{N}$,
\[
Z_n(\mathbf{x})=a_n P_n^{(\alpha,\beta)} (\cos(\rho(\mathbf{x},\mathbf{U}))),
\qquad\mathbf{x}\in\mathbb{M}^d,
\]
is a centred isotropic random field with covariance function
  $$ \cov ( Z_n (\mathbf{x}_1), Z_n (\mathbf{x}_2) )
=P_n^{ (\alpha,\beta)} (\cos(\rho(\mathbf{x}_1, \mathbf{x}_2))),  ~~~~~ \mathbf{x}_1, \mathbf{x}_2 \in \mathbb{M}^d,
$$
where $a_n$ is given by \emph{(\ref{an.def})}.
Moreover, for $k \neq n$,  the random fields $ \{\, Z_k (\mathbf{x})\colon \mathbf{x} \in \mathbb{M}^d\, \}$ and $ \{\,Z_n(\mathbf{x}), \mathbf{x} \in \mathbb{M}^d\, \}$ are uncorrelated:
\begin{equation}
\label{eq:6}
 \cov (Z_k (\mathbf{x}_1), Z_n (\mathbf{x}_2) ) =0,    ~~~~ \mathbf{x}_1, \mathbf{x}_2  \in \mathbb{M}^d.
\end{equation}
\end{lemma}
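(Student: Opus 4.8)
The plan is to reduce every assertion to the Funk--Hecke identity of Lemma~\ref{lem:1}, exploiting the fact that expectation against $\mathbf{U}$ is integration against the normalised canonical measure. Since the distribution $\mathsf{P}'$ of $\mathbf{U}$ is proportional to the invariant measure $\nu$, which is in turn proportional to $\mu$, the two normalising constants cancel, and for every bounded Borel function $f$ on $\mathbb{M}^d$ one has $\mathsf{E}[f(\mathbf{U})]=\omega_d^{-1}\int_{\mathbb{M}^d}f(\mathbf{y})\,\mathrm{d}\mathbf{y}$. Establishing this representation of the expectation is the one genuinely measure-theoretic point; everything after it is a direct substitution into Lemma~\ref{lem:1}.

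First I would check that $P_0^{(\alpha,\beta)}(x)\equiv1$ directly from the explicit series for the Jacobi polynomials. This lets me read off the mean by taking $i=n$ and $j=0$ in Lemma~\ref{lem:1}, which gives $\int_{\mathbb{M}^d}P_n^{(\alpha,\beta)}(\cos(\rho(\mathbf{x},\mathbf{y})))\,\mathrm{d}\mathbf{y}=\delta_{n0}\,\omega_d\,a_n^{-2}\,P_n^{(\alpha,\beta)}(\cos(\rho(\mathbf{x},\mathbf{x}_2)))$, and this vanishes for $n\in\mathbb{N}$. Hence $\mathsf{E}[Z_n(\mathbf{x})]=a_n\mathsf{E}[P_n^{(\alpha,\beta)}(\cos(\rho(\mathbf{x},\mathbf{U})))]=0$, so $Z_n$ is centred.

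Because $Z_n$ is centred, its covariance is the second moment $\cov(Z_n(\mathbf{x}_1),Z_n(\mathbf{x}_2))=a_n^2\,\mathsf{E}[P_n^{(\alpha,\beta)}(\cos(\rho(\mathbf{x}_1,\mathbf{U})))\,P_n^{(\alpha,\beta)}(\cos(\rho(\mathbf{x}_2,\mathbf{U})))]$. Rewriting the expectation as $\omega_d^{-1}$ times the integral over $\mathbf{y}$ and applying Lemma~\ref{lem:1} with $i=j=n$ produces $\omega_d\,a_n^{-2}\,P_n^{(\alpha,\beta)}(\cos(\rho(\mathbf{x}_1,\mathbf{x}_2)))$; the factors $a_n^2/\omega_d$ and $\omega_d/a_n^2$ cancel and leave exactly $P_n^{(\alpha,\beta)}(\cos(\rho(\mathbf{x}_1,\mathbf{x}_2)))$. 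This explains the choice of the normalising constant $a_n$ in \eqref{an.def} and shows the covariance depends on $\mathbf{x}_1,\mathbf{x}_2$ only through $\rho(\mathbf{x}_1,\mathbf{x}_2)$; together with the constant (zero) mean, this is precisely wide-sense isotropy.

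Finally, the cross-covariance is handled by the same computation with mismatched indices: for $k\neq n$ both fields are centred, so $\cov(Z_k(\mathbf{x}_1),Z_n(\mathbf{x}_2))=\omega_d^{-1}a_k a_n\int_{\mathbb{M}^d}P_k^{(\alpha,\beta)}(\cos(\rho(\mathbf{x}_1,\mathbf{y})))\,P_n^{(\alpha,\beta)}(\cos(\rho(\mathbf{x}_2,\mathbf{y})))\,\mathrm{d}\mathbf{y}$, and the Kronecker delta $\delta_{kn}=0$ in Lemma~\ref{lem:1} annihilates the integral, yielding \eqref{eq:6}. I expect no serious obstacle beyond the bookkeeping of the proportionality constants relating $\mathsf{P}'$, $\sigma$, $\nu$ and $\mu$; once $\mathsf{E}[f(\mathbf{U})]=\omega_d^{-1}\int_{\mathbb{M}^d}f\,\mathrm{d}\mathbf{x}$ is secured, each assertion collapses to a single application of the Funk--Hecke formula.
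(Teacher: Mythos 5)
Your proposal is correct and follows essentially the same route as the paper: both reduce the mean, the covariance, and the cross-covariance to the orthogonality relation of Lemma~\ref{lem:1}, after identifying $\mathsf{E}[f(\mathbf{U})]$ with $\omega_d^{-1}\int_{\mathbb{M}^d}f\,\mathrm{d}\mathbf{x}$. The only cosmetic difference is that you obtain the vanishing of the mean by taking $j=0$ (with $P_0^{(\alpha,\beta)}\equiv 1$) in Lemma~\ref{lem:1}, whereas the paper invokes the Funk--Hecke formula of \cite{MR3722488} directly with $K(x)=1$; both amount to the orthogonality of $P_n^{(\alpha,\beta)}$ to constants for $n\in\mathbb{N}$.
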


\section{Isotropic Vector Random Fields on $\mathbb{M}^d$}\label{sec:3}

In the purely spatial case, this section presents a series representation for  an  $m$-variate  isotropic and mean square continuous random field  $\{\, \mathbf{Z} (\mathbf{x})\colon \mathbf{x} \in \mathbb{M}^d\, \}$,
   and a series expression for its covariance matrix function, in terms of Jacobi polynomials.
   By mean square continuous, we mean that, for $k =1, \ldots, m$,
     $$ \mathsf{E}[ | Z_k (\mathbf{x}_1) -Z_k (\mathbf{x}_2) |^2] \to 0,  ~~ \mbox{as} ~~ \rho (\mathbf{x}_1, \mathbf{x}_2 ) \to 0,  ~ \mathbf{x}_1, \mathbf{x}_2 \in \mathbb{M}^d. $$
     It implies the continuity of each entry of the  associated covariance matrix function  in terms of $\rho (\mathbf{x}_1, \mathbf{x}_2)$.

In what follows, $d$ is assumed to be greater than $1$, while   $\mathbb{M}^d$ reduces to the unit circle $\mathbb{S}^1$ when $d=1$, over which the treatment of isotropic vector random fields may be found in
 \cite{MR3488255,MR3736190}.
For an $m \times m$ symmetric and nonnegative-definite matrix $\mathsf{B}$  with  nonnegative eigenvalues $\lambda_1, \dots$, $\lambda_m$, there is an orthogonal matrix $\mathsf{S}$ such that $\mathsf{S}^{-1}\mathsf{B}\mathsf{S}=\mathsf{D}$, where $\mathsf{D}$ is a diagonal matrix with diagonal entries $\lambda_1, \ldots,  \lambda_m$. Define the square-root of $ \mathsf{B}$ by
\[
\mathsf{B}^{\frac{1}{2}}=\mathsf{S}\mathsf{D}^{\frac{1}{2}}\mathsf{S}^{-1},
\]
where $\mathsf{D}^{\frac{1}{2}}$ is a diagonal matrix with diagonal entries $\sqrt{\lambda_1}, \ldots,  \sqrt{ \lambda_m}$. Clearly, $\mathsf{B}^{\frac{1}{2}}$ is symmetric, nonnegative-definite, and $(\mathsf{B}^{\frac{1}{2}})^2=\mathsf{B}$. Denote by $\mathsf{I}_m$ an
   $m \times m$ identity matrix.
   For a sequence of $m \times m$ matrices $\{\, \mathsf{B}_n\colon n \in \mathbb{N}_0 \,\}$, the series $\sum\limits_{n=0}^\infty \mathsf{B}_n$ is said to be convergent, if
   each of its entries is convergent.

\begin{theorem}\label{th:1}
Suppose that
 $\{\, \mathbf{V}_n\colon n  \in \mathbb{N}_0\, \}$ is a sequence of  independent $m$-variate random vectors with
$\mathsf{E} ( \mathbf{V}_n)= \mathbf{0}$ and  $\cov ( \mathbf{V}_n, \mathbf{V}_n ) =  a_n^2\mathsf{I}_m$,     $\mathbf{U}$  is a
        random vector uniformly distributed on $\mathbb{M}^d$  and is independent of
        $\{\, \mathbf{V}_n\colon n  \in \mathbb{N}_0\, \}$, and that $\{\, \mathsf{B}_n\colon n \in \mathbb{N}_0\, \}$ is a sequence of $m \times m$ symmeteric nonnegative-definite matrices.  If  the series  $\sum\limits_{n=0}^\infty \mathsf{B}_n  P_n^{ (\alpha, \beta) } (1)$ converges,  then
       \begin{equation}
     \label{spatiostoc1}
      \mathbf{Z} (\mathbf{x}) = \sum_{n=0}^\infty  \mathsf{B}_n^{\frac{1}{2}} \mathbf{V}_n P_n^{ (\alpha, \beta) } ( \cos \rho (\mathbf{x},  \mathbf{U} )),
      ~~~~~~ \mathbf{x}   \in \mathbb{M}^d,
      \end{equation}
    is a centred $m$-variate isotropic random field on $\mathbb{M}^d$,  with  covariance matrix function
       \begin{equation}
     \label{spatio.cmf1}
      \cov ( \mathbf{Z} (\mathbf{x}_1), \mathbf{Z}(\mathbf{x}_2) )  = \sum_{n=0}^\infty  \mathsf{B}_n  P_n^{(\alpha, \beta) } \left( \cos  \rho (\mathbf{x}_1, \mathbf{x}_2) \right),
      ~~~~~~ \mathbf{x}_1, \mathbf{x}_2 \in \mathbb{M}^d.
      \end{equation}
The terms of \emph{(\ref{spatiostoc1})} are uncorrelated; more precisely,
           $$ \cov  \left(  \mathsf{B}_i^{\frac{1}{2}} \mathbf{V}_i P_i^{  (\alpha, \beta)  } ( \rho (\mathbf{x}_1,  \mathbf{U})),  ~
                         \mathsf{B}_j^{\frac{1}{2}} \mathbf{V}_j P_j^{ (\alpha, \beta)  } ( \rho (\mathbf{x}_2,  \mathbf{U} ))  \right) = \mathbf{0}, ~~~ \mathbf{x}_1, \mathbf{x}_2 \in \mathbb{M}^d, ~ i \neq j. $$
\end{theorem}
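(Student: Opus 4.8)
The plan is to work at the level of the individual summands, compute all of their pairwise cross-covariances using Lemma~\ref{lem:2} together with the independence assumptions, and then assemble these into the mean-square convergence of the series and into the stated covariance. Throughout I would abbreviate $T_n(\mathbf{x})=P_n^{(\alpha,\beta)}(\cos\rho(\mathbf{x},\mathbf{U}))$, so that the $n$-th term of \eqref{spatiostoc1} is $\mathsf{B}_n^{\frac{1}{2}}\mathbf{V}_nT_n(\mathbf{x})$. By Lemma~\ref{lem:2} the field $a_nT_n$ is centred with $\cov(a_nT_n(\mathbf{x}_1),a_nT_n(\mathbf{x}_2))=P_n^{(\alpha,\beta)}(\cos\rho(\mathbf{x}_1,\mathbf{x}_2))$; since $\mathsf{E}[a_nT_n(\mathbf{x})]=0$, this rewrites as $\mathsf{E}[T_n(\mathbf{x}_1)T_n(\mathbf{x}_2)]=a_n^{-2}P_n^{(\alpha,\beta)}(\cos\rho(\mathbf{x}_1,\mathbf{x}_2))$, the identity that drives every computation below.

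First I would note that each summand is centred: since $\mathbf{V}_n$ is independent of $\mathbf{U}$ and $\mathsf{E}[\mathbf{V}_n]=\mathbf{0}$, we have $\mathsf{E}[\mathsf{B}_n^{\frac{1}{2}}\mathbf{V}_nT_n(\mathbf{x})]=\mathsf{B}_n^{\frac{1}{2}}\mathsf{E}[\mathbf{V}_n]\,\mathsf{E}[T_n(\mathbf{x})]=\mathbf{0}$. Next I would compute the pairwise cross-covariance $\mathsf{B}_i^{\frac{1}{2}}\mathsf{E}[\mathbf{V}_i\mathbf{V}_j^{\top}T_i(\mathbf{x}_1)T_j(\mathbf{x}_2)]\mathsf{B}_j^{\frac{1}{2}}$, splitting into two cases. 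For $i\neq j$, conditioning on $\mathbf{U}$ and using that $\mathbf{V}_i$, $\mathbf{V}_j$, $\mathbf{U}$ are mutually independent gives $\mathsf{E}[\mathbf{V}_i\mathbf{V}_j^{\top}\mid\mathbf{U}]=\mathsf{E}[\mathbf{V}_i]\mathsf{E}[\mathbf{V}_j]^{\top}=\mathbf{0}$, so the cross-covariance vanishes; this is exactly the asserted uncorrelatedness of the terms of \eqref{spatiostoc1}. For $i=j$, independence of $\mathbf{V}_i$ from $\mathbf{U}$ factors the expectation as $\mathsf{E}[\mathbf{V}_i\mathbf{V}_i^{\top}]\,\mathsf{E}[T_i(\mathbf{x}_1)T_i(\mathbf{x}_2)]=a_i^2\mathsf{I}_m\cdot a_i^{-2}P_i^{(\alpha,\beta)}(\cos\rho(\mathbf{x}_1,\mathbf{x}_2))$, so that the $i$-th diagonal contribution is $\mathsf{B}_i^{\frac{1}{2}}\mathsf{I}_m\mathsf{B}_i^{\frac{1}{2}}P_i^{(\alpha,\beta)}(\cos\rho(\mathbf{x}_1,\mathbf{x}_2))=\mathsf{B}_iP_i^{(\alpha,\beta)}(\cos\rho(\mathbf{x}_1,\mathbf{x}_2))$, using $(\mathsf{B}_i^{\frac{1}{2}})^2=\mathsf{B}_i$.

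It remains to promote these termwise identities to the infinite series, and here lies the only real obstacle: justifying that \eqref{spatiostoc1} converges in mean square and that the covariance of the limit is the termwise sum. I would show the partial sums $\mathbf{Z}_N(\mathbf{x})=\sum_{n=0}^N\mathsf{B}_n^{\frac{1}{2}}\mathbf{V}_nT_n(\mathbf{x})$ are Cauchy in $L^2$: by the vanishing cross-covariances just established and by $\cos\rho(\mathbf{x},\mathbf{x})=1$, for $N>M$ one gets $\mathsf{E}\|\mathbf{Z}_N(\mathbf{x})-\mathbf{Z}_M(\mathbf{x})\|^2=\operatorname{tr}\sum_{n=M+1}^N\mathsf{B}_nP_n^{(\alpha,\beta)}(1)$, and since each $\mathsf{B}_n$ being nonnegative-definite makes every trace summand nonnegative while $\sum_n\mathsf{B}_nP_n^{(\alpha,\beta)}(1)$ converges entrywise by hypothesis, this tail tends to $0$. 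Thus $\mathbf{Z}(\mathbf{x})$ is a well-defined $L^2$ field; mean-square convergence preserves the zero mean, and continuity of the covariance bilinear form under $L^2$ limits lets me pass to the limit in the partial-sum covariances $\sum_{n=0}^N\mathsf{B}_nP_n^{(\alpha,\beta)}(\cos\rho(\mathbf{x}_1,\mathbf{x}_2))$ to obtain \eqref{spatio.cmf1}. Finally, isotropy is immediate, since the mean is the constant $\mathbf{0}$ and \eqref{spatio.cmf1} depends on $\mathbf{x}_1$, $\mathbf{x}_2$ only through $\rho(\mathbf{x}_1,\mathbf{x}_2)$.
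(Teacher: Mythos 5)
Your proposal is correct and follows essentially the same route as the paper's own proof: termwise cross-covariances via Lemma~\ref{lem:2} and the independence assumptions, a Cauchy-in-$L^2$ argument for \eqref{spatiostoc1} driven by the convergence of $\sum_n\mathsf{B}_nP_n^{(\alpha,\beta)}(1)$, and passage to the limit to obtain \eqref{spatio.cmf1}. If anything, your handling of the matrix square roots (keeping $\mathsf{B}_i^{1/2}\cdots\mathsf{B}_j^{1/2}$ on either side of the expectation) and the reduction of the Cauchy criterion to a nonnegative trace series is slightly cleaner than the paper's write-up.
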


Since
   $  \left|  P_n^{ (\alpha, \beta) } (\cos \vartheta) \right|  \le P_n^{  (\alpha, \beta) } (1),  n \in \mathbb{N}_0, $
       the convergent  assumption  of the series $\sum\limits_{n=0}^\infty \mathsf{B}_n P_n^{ (\alpha, \beta) } (1)$
       ensures  not only the mean square convergence of the series at the right hand of (\ref{spatiostoc1}), but also the uniform and absolute convergence of
       the series at the right hand side of (\ref{spatio.cmf1}).

When $\mathbb{M}^d=\mathbb{S}^2$ and $m=1$, we have $\dim H_n=2n+1$, and  \eqref{spatio.cmf1} takes the form
\[
\cov ( Z (\mathbf{x}_1), Z(\mathbf{x}_2) )  = \sum_{n=0}^\infty  b_n  P_n\left( \cos  \rho (\mathbf{x}_1, \mathbf{x}_2) \right),
\]
where $P_n (x) $ are Legendre polynomials. In the theory of Cosmic Microwave Background, this equation is traditionally written in the form
\[
\cov ( Z (\mathbf{x}_1), Z(\mathbf{x}_2) )  = \sum_{\ell=0}^\infty(2\ell+1)C_{\ell}  P_{\ell}\left(\mathbf{x}_1\cdot\mathbf{x}_2\right),
\]
and the sequence $\{\,C_{\ell}\colon\ell\geq 0\,\}$ is called the \emph{angular power spectrum}. In the general case, define the angular power spectrum by
\[
\mathsf{C}_n=\frac{1}{\dim H_n}\mathsf{B}_n.
\]
A lot of examples of the angular power spectrum for general compact two-point homogeneous spaces may be found in \cite{MR0423000}.

As the next theorem indicates,  (\ref{spatio.cmf1}) is a general form that the covariance matrix function of an $m$-variate isotropic and mean square continuous random field on $\mathbb{M}^d$
      must take.

\begin{theorem}\label{th:2}
For an $m$-variate isotropic and mean square continuous random field $ \{\, Z(\mathbf{x})\colon \mathbf{x} \in \mathbb{M}^d\, \}$, its covariance matrix function
$\cov ( Z(\mathbf{x}_1),  Z (\mathbf{x}_2) ) $ is of the form
    \begin{equation}
     \label{spatio.cmf2}
      \mathsf{C} ( \mathbf{x}_1, \mathbf{x}_2 ) = \sum_{n=0}^\infty  \mathsf{B}_n  P_n^{ (\alpha, \beta) } \left( \cos  \rho (\mathbf{x}_1, \mathbf{x}_2)  \right),
      ~~~~~~  \mathbf{x}_1, \mathbf{x}_2 \in \mathbb{M}^d,
      \end{equation}
where
      $\{\,\mathsf{B}_n\colon n \in \mathbb{N}_0\, \}$ is a sequence of $m \times m$ nonnegative-definite matrices and the series  $\sum\limits_{n=0}^\infty \mathsf{B}_n  P_n^{ (\alpha, \beta) } (1)$ converges.

      Conversely, if an $m \times m $ matrix function  $\mathsf{C} (\mathbf{x}_1, \mathbf{x}_2)$
is of the form \emph{(\ref{spatio.cmf2})},  then  it is the covariance matrix function   of an $m$-variate isotropic Gaussian or elliptically contoured random field on $\mathbb{M}^d$.
\end{theorem}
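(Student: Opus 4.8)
The plan is to handle the two assertions of Theorem~\ref{th:2} separately: the converse is essentially a restatement of Theorem~\ref{th:1}, while the representation direction reduces to the scalar result \eqref{scalar.cov} by one-dimensional projections. For the converse, suppose $\mathsf{C}(\mathbf{x}_1,\mathbf{x}_2)$ has the form \eqref{spatio.cmf2} with $\{\,\mathsf{B}_n\colon n\in\mathbb{N}_0\,\}$ nonnegative-definite and $\sum_{n=0}^\infty\mathsf{B}_nP_n^{(\alpha,\beta)}(1)$ convergent. Then Theorem~\ref{th:1} applies directly: taking the driving vectors $\mathbf{V}_n$ to be independent and centred with $\cov(\mathbf{V}_n,\mathbf{V}_n)=a_n^2\mathsf{I}_m$, and $\mathbf{U}$ uniform on $\mathbb{M}^d$ and independent of the $\mathbf{V}_n$, the series \eqref{spatiostoc1} yields a field whose covariance matrix function is exactly \eqref{spatio.cmf2}. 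Choosing the $\mathbf{V}_n$ Gaussian produces a Gaussian field; the elliptically contoured case is covered by the existence result cited in the Introduction \cite{MR2774237}. This realises \eqref{spatio.cmf2} as a genuine covariance matrix function and proves the converse.

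For the representation direction I would first record two structural facts. Since the field is purely spatial and isotropic, $\mathsf{C}(\mathbf{x}_1,\mathbf{x}_2)$ depends only on $\rho(\mathbf{x}_1,\mathbf{x}_2)$, hence $\mathsf{C}(\mathbf{x}_1,\mathbf{x}_2)=\mathsf{C}(\mathbf{x}_2,\mathbf{x}_1)$; combined with the general identity $\mathsf{C}(\mathbf{x}_2,\mathbf{x}_1)=\mathsf{C}(\mathbf{x}_1,\mathbf{x}_2)^{\top}$ this forces each matrix value of $\mathsf{C}$ to be symmetric. Mean square continuity makes every entry a continuous function of $u=\cos\rho(\mathbf{x}_1,\mathbf{x}_2)$ on $[-1,1]$. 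The core device is \emph{projection}: for every fixed $\mathbf{a}\in\mathbb{R}^m$ the scalar field $\mathbf{a}^{\top}\mathbf{Z}(\mathbf{x})$ is isotropic and mean square continuous, so by \eqref{scalar.cov} its covariance function admits the expansion $\mathbf{a}^{\top}\mathsf{C}\,\mathbf{a}=\sum_{n=0}^\infty\gamma_n(\mathbf{a})\,P_n^{(\alpha,\beta)}(u)$ with $\gamma_n(\mathbf{a})\ge 0$ and $\sum_{n=0}^\infty\gamma_n(\mathbf{a})P_n^{(\alpha,\beta)}(1)<\infty$.

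The coefficient matrices $\mathsf{B}_n$ are then recovered from the Jacobi orthogonality \eqref{Gegenbauer.orth}. The bound $|P_n^{(\alpha,\beta)}(\cos\vartheta)|\le P_n^{(\alpha,\beta)}(1)$ together with convergence of $\sum_n\gamma_n(\mathbf{a})P_n^{(\alpha,\beta)}(1)$ makes the expansion uniform, so I may integrate term by term against $P_n^{(\alpha,\beta)}(u)(1-u)^\alpha(1+u)^\beta$ and read off $\gamma_n(\mathbf{a})=\mathbf{a}^{\top}\mathsf{B}_n\mathbf{a}$, where $\mathsf{B}_n$ is the symmetric matrix obtained by applying the same integral formula entrywise to $\mathsf{C}$. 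Since $\gamma_n(\mathbf{a})\ge 0$ for every $\mathbf{a}$, each $\mathsf{B}_n$ is nonnegative-definite, and summing the scalar expansions over $\mathbf{a}$ ranging through a basis (equivalently, polarising) gives the matrix identity \eqref{spatio.cmf2}. Finally, evaluating at $\mathbf{x}_1=\mathbf{x}_2$ yields $\sum_{n=0}^\infty(\mathsf{B}_n)_{kk}P_n^{(\alpha,\beta)}(1)=\var(Z_k(\mathbf{x}))<\infty$ on the diagonal, and the inequality $|(\mathsf{B}_n)_{kl}|\le\sqrt{(\mathsf{B}_n)_{kk}(\mathsf{B}_n)_{ll}}$, valid for nonnegative-definite $\mathsf{B}_n$, transfers convergence to the off-diagonal entries, so $\sum_{n=0}^\infty\mathsf{B}_nP_n^{(\alpha,\beta)}(1)$ converges.

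The step I expect to require the most care is the interchange of summation and integration used to identify $\gamma_n(\mathbf{a})$ with $\mathbf{a}^{\top}\mathsf{B}_n\mathbf{a}$, together with the verification that the Jacobi expansion furnished by the scalar theorem coincides with the orthogonal expansion whose coefficients are governed by \eqref{Gegenbauer.orth}; on the real projective spaces this also relies on the geometric re-parametrisation of $(\alpha,\beta)$ from Table~\ref{tab:2}, which is what makes the single family $\{\,P_n^{(\alpha,\beta)}\,\}$ the appropriate orthogonal system via Lemma~\ref{lem:1}. Everything else is bookkeeping resting on the scalar result and the symmetry and continuity of $\mathsf{C}$.
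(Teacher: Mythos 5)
Your proposal is correct and follows essentially the same route as the paper: the converse is obtained from the construction in Theorem~\ref{th:1}, and the direct part reduces to the scalar representation \eqref{scalar.cov} via the projections $\mathbf{a}^{\top}\mathbf{Z}(\mathbf{x})$, with polarisation over $\mathbf{a}\pm\mathbf{b}$ and the symmetry of $\mathsf{C}$ used to recover the entries of $\mathsf{B}_n$ and their nonnegative definiteness. The only cosmetic difference is that you extract $\mathsf{B}_n$ explicitly through the orthogonality integral \eqref{Gegenbauer.orth}, whereas the paper identifies the coefficients by comparing the polarised scalar expansions directly; both rest on the same uniqueness of the Jacobi expansion.
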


 Examples of covariance matrix functions on $\mathbb{S}^d$ may be found in, for instance,  \cite{MR3488255,MR3736190}.
We would call for parametric and semi-parametric covariance matrix structures on $\mathbb{M}^d$.

\section{Time Varying Isotropic Vector Random Fields on $\mathbb{M}^d$}\label{sec:4}

For  an  $m$-variate random field  $\{\, \mathbf{Z} (\mathbf{x}; t)\colon \mathbf{x} \in \mathbb{M}^d, t \in \mathbb{T}\, \}$ that  is
 isotropic and mean square continuous  over  $\mathbb{M}^d$ and stationary on $\mathbb{T}$,
 this section presents the general form of   its covariance matrix function $\mathsf{C} (\rho (\mathbf{x}_1, \mathbf{x}_2); t)$,
  which  is a continuous  function of $\rho (\mathbf{x}_1, \mathbf{x}_2)$ and is also a continuous function of $t \in \mathbb{R}$ if $\mathbb{T} = \mathbb{R}$.
  A series representation is given in the following theorem for such a random field, as an extension of that on $\mathbb{S}^d \times \mathbb{T}$.

\begin{theorem}\label{th:3}
If  an  $m$-variate random field  $\{ \mathbf{Z} (\mathbf{x}; t), \mathbf{x} \in \mathbb{M}^d, t \in \mathbb{T} \}$ is     isotropic and mean square continuous  over  $\mathbb{M}^d$ and stationary on $\mathbb{T}$, then
\[
\mathsf{C} (\rho (\mathbf{x}_1, \mathbf{x}_2); -t) = ( \mathsf{C} (\rho (\mathbf{x}_1, \mathbf{x}_2); t) )^{\top},
\]
 and
     $ \frac{\mathsf{C} (\rho (\mathbf{x}_1, \mathbf{x}_2); t) + \mathsf{C} (\rho (\mathbf{x}_1, \mathbf{x}_2); -t)}{2} $ is of the form
       \begin{equation}
       \label{cov.matrix.fun.1}
        \frac{\mathsf{C} (\rho (\mathbf{x}_1, \mathbf{x}_2); t) + \mathsf{C} (\rho (\mathbf{x}_1, \mathbf{x}_2); -t)}{2}   =
          \sum\limits_{n=0}^\infty  \mathsf{B}_n (t) P_n^{ (\alpha, \beta) } (\cos \rho (\mathbf{x}_1, \mathbf{x}_2)),
       \end{equation}
    \hfill  $\mathbf{x}_1$, $\mathbf{x}_2\in\mathbb{M}^d$, $t\in\mathbb{T}$,

    \noindent
    where, for each fixed $n \in \mathbb{N}_0$,  $ \mathsf{B}_n (t)$ is a stationary covariance matrix function on $\mathbb{T}$, and,   for each fixed $t \in \mathbb{T}$,   $ \mathsf{B}_n (t)$ ($ n \in \mathbb{N}_0$) are $m \times m $ symmetric matrices and
    $\sum\limits_{n=0}^\infty  \mathsf{B}_n (t)  P_n^{ (\alpha, \beta) } (1)$  converges.
\end{theorem}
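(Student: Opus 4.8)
The plan is to reduce the assertion to the purely spatial Theorem~\ref{th:2} and the orthogonality relation of Lemma~\ref{lem:1}; the genuinely new feature is the temporal behaviour of the coefficients, and that is where the work lies. I would first dispose of the symmetry identity. Writing $\mathsf{C}(\rho(\mathbf{x}_1,\mathbf{x}_2);t_1-t_2)=\cov(\mathbf{Z}(\mathbf{x}_1;t_1),\mathbf{Z}(\mathbf{x}_2;t_2))$ and transposing simply interchanges the two factors of the covariance; since $\rho(\mathbf{x}_2,\mathbf{x}_1)=\rho(\mathbf{x}_1,\mathbf{x}_2)$ and the lag becomes $t_2-t_1$, this gives $\mathsf{C}(\rho;t)^{\top}=\mathsf{C}(\rho;-t)$ at once. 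In particular the symmetrised matrix $\tilde{\mathsf{C}}(\rho;t):=\tfrac12(\mathsf{C}(\rho;t)+\mathsf{C}(\rho;-t))$ is symmetric for every $t$ and even in $t$.

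Next I would establish \eqref{cov.matrix.fun.1}, together with the symmetry and summability of its coefficients, by a subtraction device. Fix $t$ and choose $t_1,t_2\in\mathbb{T}$ with $t_1-t_2=t$. The field $\mathbf{W}(\mathbf{x})=\tfrac{1}{\sqrt2}(\mathbf{Z}(\mathbf{x};t_1)+\mathbf{Z}(\mathbf{x};t_2))$ is isotropic and mean square continuous on $\mathbb{M}^d$, and a short computation gives $\cov(\mathbf{W}(\mathbf{x}_1),\mathbf{W}(\mathbf{x}_2))=\mathsf{C}(\rho(\mathbf{x}_1,\mathbf{x}_2);0)+\tilde{\mathsf{C}}(\rho(\mathbf{x}_1,\mathbf{x}_2);t)$. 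Theorem~\ref{th:2} applied to $\mathbf{W}$ expands this covariance as a uniformly convergent series $\sum_n\mathsf{A}_n(t)P_n^{(\alpha,\beta)}(\cos\rho(\mathbf{x}_1,\mathbf{x}_2))$ with nonnegative-definite $\mathsf{A}_n(t)$, while applied to the single-time field $\mathbf{Z}(\cdot;0)$ it expands $\mathsf{C}(\rho;0)=\tilde{\mathsf{C}}(\rho;0)$ with nonnegative-definite coefficients $\mathsf{B}_n(0)$. Since the Jacobi polynomials form a complete orthogonal system these coefficients are unique, so the coefficient of $P_n^{(\alpha,\beta)}$ in $\tilde{\mathsf{C}}(\rho;t)$ is $\mathsf{B}_n(t)=\mathsf{A}_n(t)-\mathsf{B}_n(0)$. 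Being a difference of symmetric matrices it is symmetric, $\sum_n\mathsf{B}_n(t)P_n^{(\alpha,\beta)}(1)$ converges as a difference of two convergent series, and because $|P_n^{(\alpha,\beta)}(\cos\rho)|\le P_n^{(\alpha,\beta)}(1)$ the resulting series converges uniformly to $\tilde{\mathsf{C}}(\rho;t)$, which is \eqref{cov.matrix.fun.1}.

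The substantive step, which I expect to be the main obstacle, is to show that for each fixed $n$ the matrix $\mathsf{B}_n(t)$ is a stationary covariance matrix function of the temporal variable; I would realise it as the covariance of a genuine stationary process. For each $t$ put
\[
\mathbf{Y}_n(t)=\frac{a_n^2}{\omega_d}\int_{\mathbb{M}^d}\mathbf{Z}(\mathbf{x};t)\,P_n^{(\alpha,\beta)}(\cos\rho(\mathbf{o},\mathbf{x}))\,\mathrm{d}\mathbf{x},
\]
a mean square integral that exists because $\mathbf{Z}(\cdot;t)$ is mean square continuous on the compact space $\mathbb{M}^d$ and the integrand is bounded. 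Interchanging covariance with these spatial integrals shows that $\mathsf{K}_n(t_1-t_2):=\cov(\mathbf{Y}_n(t_1),\mathbf{Y}_n(t_2))$ depends only on the lag, so $\{\mathbf{Y}_n(t)\colon t\in\mathbb{T}\}$ is wide-sense stationary and $\mathsf{K}_n$ is a stationary covariance matrix function, as is the time-reversed function $\mathsf{K}_n(-t)$. Substituting the uniformly convergent expansion of $\tilde{\mathsf{C}}$ into $\tfrac12(\mathsf{K}_n(t)+\mathsf{K}_n(-t))$, interchanging summation with integration, and applying Lemma~\ref{lem:1} twice to collapse the double spatial integral leaves only the $n$-th term and yields $\tfrac12(\mathsf{K}_n(t)+\mathsf{K}_n(-t))=P_n^{(\alpha,\beta)}(1)\,\mathsf{B}_n(t)$. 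Dividing by the positive constant $P_n^{(\alpha,\beta)}(1)$ exhibits $\mathsf{B}_n(t)$ as an average of two stationary covariance matrix functions, hence itself one, which completes the proof.

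The points demanding care are the two interchanges---of the covariance with the mean square integral, and of the summation with the integral (the latter justified by the uniform convergence established above)---and the uniqueness of the Jacobi coefficients, which is used repeatedly to identify $\mathsf{B}_n(t)$ across its two descriptions.
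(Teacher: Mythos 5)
Your proposal is correct and follows essentially the same route as the paper: reduce to Theorem~\ref{th:2} via auxiliary purely spatial fields built from $\mathbf{Z}$ at two fixed times (the paper uses $\mathbf{Z}(\cdot;0)\pm\mathbf{Z}(\cdot;t)$ where you use $\mathbf{Z}(\cdot;t_1)+\mathbf{Z}(\cdot;t_2)$ against $\mathbf{Z}(\cdot;0)$), then exhibit $\mathsf{B}_n(t)$, via the Funk--Hecke orthogonality of Lemma~\ref{lem:1}, as a positive multiple of the covariance function of a genuine stationary temporal process obtained by integrating the field against $P_n^{(\alpha,\beta)}(\cos\rho(\cdot,\cdot))$. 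The only cosmetic differences are that you integrate against a fixed pole $\mathbf{o}$ rather than the random $\mathbf{U}$ and symmetrise in $t$ at the level of the covariance $\mathsf{K}_n$ rather than at the level of the field (the paper adjoins an independent copy $\tilde{\mathbf{Z}}(\cdot;-t)$); both devices are equivalent and your computation $\tfrac12(\mathsf{K}_n(t)+\mathsf{K}_n(-t))=P_n^{(\alpha,\beta)}(1)\,\mathsf{B}_n(t)$ checks out.
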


While a general form  of  $ \frac{\mathsf{C} ( \rho (\mathbf{x}_1, \mathbf{x}_2); t) + \mathsf{C} (\rho (\mathbf{x}_1, \mathbf{x}_2); -t)}{2} $, instead of $\mathsf{C} (\rho (\mathbf{x}_1, \mathbf{x}_2); t)$ itself,  is given in Theorem~\ref{th:3},
    that of  $\mathsf{C} (\rho (\mathbf{x}_1, \mathbf{x}_2); t)$ can be obtained in certain special cases, such as  spatio-temporal symmetric, and purely spatial.

\begin{corollary}
If $\mathsf{C} (\rho (\mathbf{x}_1, \mathbf{x}_2); t)$ is spatio-temporal symmetric in the sense  that
               $$ \mathsf{C} ( \rho (\mathbf{x}_1, \mathbf{x}_2);  - t  ) =\mathsf{C} ( \rho (\mathbf{x}_1, \mathbf{x}_2);  t  ),   ~~~~~~~~ \mathbf{x}_1, \mathbf{x}_2 \in \mathbb{M}^d, ~ t \in \mathbb{T}, $$
          then   it  takes  the form
     $$   \mathsf{C} (\rho (\mathbf{x}_1, \mathbf{x}_2); t)    =
          \sum\limits_{n=0}^\infty  \mathsf{B}_n (t) P_n^{ (\alpha, \beta) } (\cos \rho (\mathbf{x}_1, \mathbf{x}_2)),   ~~  \mathbf{x}_1, \mathbf{x}_2 \in \mathbb{M}^d,    ~ t \in \mathbb{T}.
            $$
\end{corollary}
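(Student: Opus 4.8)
The plan is to obtain this as an immediate consequence of Theorem~\ref{th:3}. That theorem, applied to the given isotropic, mean square continuous, temporally stationary field, already furnishes the series representation
\[
\frac{\mathsf{C}(\rho(\mathbf{x}_1,\mathbf{x}_2); t) + \mathsf{C}(\rho(\mathbf{x}_1,\mathbf{x}_2); -t)}{2} = \sum_{n=0}^\infty \mathsf{B}_n(t)\, P_n^{(\alpha,\beta)}(\cos\rho(\mathbf{x}_1,\mathbf{x}_2)),
\]
together with the stated properties of the coefficient functions $\mathsf{B}_n(t)$: each is a stationary covariance matrix function in $t$, each is an $m\times m$ symmetric matrix for fixed $t$, and the series $\sum_n \mathsf{B}_n(t)\, P_n^{(\alpha,\beta)}(1)$ converges. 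The only additional input I would use is the spatio-temporal symmetry hypothesis $\mathsf{C}(\rho; -t) = \mathsf{C}(\rho; t)$.

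Under that hypothesis the averaged left-hand side collapses: substituting $\mathsf{C}(\rho; -t) = \mathsf{C}(\rho; t)$ gives
\[
\frac{\mathsf{C}(\rho; t) + \mathsf{C}(\rho; -t)}{2} = \frac{\mathsf{C}(\rho; t) + \mathsf{C}(\rho; t)}{2} = \mathsf{C}(\rho; t),
\]
so the symmetrized covariance in \eqref{cov.matrix.fun.1} is precisely $\mathsf{C}(\rho(\mathbf{x}_1,\mathbf{x}_2); t)$ itself. Reading off Theorem~\ref{th:3} then yields the claimed expansion $\mathsf{C}(\rho(\mathbf{x}_1,\mathbf{x}_2); t) = \sum_{n=0}^\infty \mathsf{B}_n(t)\, P_n^{(\alpha,\beta)}(\cos\rho(\mathbf{x}_1,\mathbf{x}_2))$, with the same coefficient matrices $\mathsf{B}_n(t)$ inheriting all the properties listed in the theorem.

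There is essentially no obstacle here: the content is entirely carried by Theorem~\ref{th:3}, and the corollary is just the observation that symmetry removes the averaging. The one point worth flagging is internal consistency — combining the present hypothesis with the general relation $\mathsf{C}(\rho; -t) = (\mathsf{C}(\rho; t))^{\top}$ from Theorem~\ref{th:3} forces $\mathsf{C}(\rho; t) = (\mathsf{C}(\rho; t))^{\top}$, so that the covariance matrix is symmetric for every $t$; this is compatible with, and in fact reflected by, the symmetry of each $\mathsf{B}_n(t)$ already asserted in the theorem.
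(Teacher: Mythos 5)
Your proposal is correct and is exactly the intended argument: the paper treats this corollary as an immediate consequence of Theorem~\ref{th:3}, since the symmetry hypothesis makes the averaged covariance $\tfrac{1}{2}\bigl(\mathsf{C}(\rho;t)+\mathsf{C}(\rho;-t)\bigr)$ collapse to $\mathsf{C}(\rho;t)$ itself. Your additional remark that the hypothesis together with $\mathsf{C}(\rho;-t)=(\mathsf{C}(\rho;t))^{\top}$ forces each $\mathsf{C}(\rho;t)$ to be symmetric is a correct and harmless consistency check.
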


In contrast to those in (\ref{cov.matrix.fun.1}),  the $m \times m$ matrices  $ \mathsf{B}_n (t)$ ($ n \in \mathbb{N}_0$) in the next theorem are not necessarily   symmetric.
       One simple such  example is
            $$  \mathsf{B} (t) = \left\{
                                           \begin{array}{ll}
                                           \mathsf{\Sigma}+\mathsf{\Phi}        \mathsf{\Sigma}  \mathsf{\Phi}^{\top},  ~  &  ~ t =0, \\
                                            \mathsf{\Phi}        \mathsf{\Sigma},   ~   &  ~  t = -1, \\
                                            \mathsf{\Sigma}  \mathsf{\Phi}^{\top},  ~  &  ~ t = 1, \\
                                            \mathsf{0},  ~ & ~  t = \pm 2, \pm 3, \ldots,
                                            \end{array}    \right. $$
    which is     the covariance matrix function of  an $m$-variate first order moving average time series $\mathbf{Z} (t) = \bm{\varepsilon}  (t) + \mathsf{\Phi}  \bm{\varepsilon} (t-1),
       t \in \mathbb{Z}, $  where $\{\,  \bm{\varepsilon}  (t)\colon t \in \mathbb{Z}\, \}$ is $m$-variate  white noise with $\mathsf{E}[ \bm{\varepsilon}  (t)] = \mathbf{0}$ and
       $\var [\bm{\varepsilon}  (t)] = \mathsf{\Sigma}$, and $\mathsf{\Phi}$ is an $m \times m$ matrix.

\begin{theorem}\label{th:4}
 An  $m \times m$ matrix function
         \begin{equation}
             \label{cov.matrix.fun.2}
        \mathsf{C} ( \rho (\mathbf{x}_1, \mathbf{x}_2); t)    =
          \sum\limits_{n=0}^\infty  \mathsf{B}_n (t)  P_n^{ (\alpha, \beta) } (\cos \rho (\mathbf{x}_1, \mathbf{x}_2)), ~   ~~   \mathbf{x}_1, \mathbf{x}_2 \in \mathbb{M}^d, ~ t \in \mathbb{T},
          \end{equation}
       is the covariance matrix function of   an  $m$-variate Gaussian or elliptically contoured random field
        on $ \mathbb{M}^d \times  \mathbb{T} $  if and only if  $\{\, \mathsf{B}_n (t)\colon n \in \mathbb{N}_0\, \}$ is a sequence of  stationary covariance matrix functions on $\mathbb{T}$
        and $\sum\limits_{n=0}^\infty  \mathsf{B}_n (0) P_n^{ (\alpha, \beta) } (1)$   converges.
\end{theorem}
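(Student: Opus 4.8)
The plan is to prove both directions of the equivalence, using Theorem~\ref{th:1} and Theorem~\ref{th:3} as the main engines, together with Bochner's theorem on the temporal domain $\mathbb{T}$. I would first treat the necessity (the ``only if'' direction). Suppose $\mathsf{C}(\rho(\mathbf{x}_1,\mathbf{x}_2);t)$ given by \eqref{cov.matrix.fun.2} is the covariance matrix function of such a field. Fixing $t=0$ reduces the expansion to the purely spatial case, so Theorem~\ref{th:2} immediately yields that each $\mathsf{B}_n(0)$ is nonnegative-definite and that $\sum_n \mathsf{B}_n(0)P_n^{(\alpha,\beta)}(1)$ converges. To extract the temporal structure of each $\mathsf{B}_n(t)$, I would exploit the orthogonality of Lemma~\ref{lem:1}: for fixed $t$, integrating \eqref{cov.matrix.fun.2} against $P_n^{(\alpha,\beta)}(\cos\rho(\mathbf{x}_1,\mathbf{x}))$ and $P_n^{(\alpha,\beta)}(\cos\rho(\mathbf{x}_2,\mathbf{x}))$ over $\mathbf{x}\in\mathbb{M}^d$ isolates $\mathsf{B}_n(t)$ (up to the constant $\omega_d/a_n^2$). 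This recovery formula, combined with the defining positive-definiteness of $\mathsf{C}$ and the stationarity on $\mathbb{T}$, should force each $\mathsf{B}_n(t)$ to be a stationary matrix covariance function on $\mathbb{T}$.

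For the sufficiency (the ``if'' direction), I would build the field explicitly, as in Theorem~\ref{th:1}, but now attaching a temporal process to each mode. Since each $\mathsf{B}_n(t)$ is a stationary matrix covariance function on $\mathbb{T}$, there exists (by the matrix-valued version of Bochner's theorem, or equivalently the existence statement cited via \cite{MR2774237}) a centred $m$-variate stationary process $\{\mathbf{V}_n(t)\colon t\in\mathbb{T}\}$ with $\cov(\mathbf{V}_n(t_1),\mathbf{V}_n(t_2))=\mathsf{B}_n(t_1-t_2)$, and I would take these processes to be mutually independent across $n$. Letting $\mathbf{U}$ be uniform on $\mathbb{M}^d$ and independent of all the $\mathbf{V}_n$, I set
\[
\mathbf{Z}(\mathbf{x};t)=\sum_{n=0}^\infty a_n\,\mathbf{V}_n(t)\,P_n^{(\alpha,\beta)}(\cos\rho(\mathbf{x},\mathbf{U})).
\]
Using Lemma~\ref{lem:2} (the covariance and uncorrelatedness of the zonal building blocks $a_n P_n^{(\alpha,\beta)}(\cos\rho(\cdot,\mathbf{U}))$) together with the independence of $\mathbf{V}_n$ from $\mathbf{U}$, a direct conditioning argument on $\mathbf{U}$ should give that $\cov(\mathbf{Z}(\mathbf{x}_1;t_1),\mathbf{Z}(\mathbf{x}_2;t_2))$ collapses to $\sum_n \mathsf{B}_n(t_1-t_2)P_n^{(\alpha,\beta)}(\cos\rho(\mathbf{x}_1,\mathbf{x}_2))$, which is exactly \eqref{cov.matrix.fun.2}. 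The bound $|P_n^{(\alpha,\beta)}(\cos\vartheta)|\le P_n^{(\alpha,\beta)}(1)$ together with convergence of $\sum_n\mathsf{B}_n(0)P_n^{(\alpha,\beta)}(1)$ guarantees mean-square convergence of the series defining $\mathbf{Z}$, so the construction is well defined. Isotropy and stationarity are then inherited from the invariance of the law of $\mathbf{U}$ and the stationarity of each $\mathbf{V}_n$.

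The main obstacle I anticipate is the necessity direction, specifically justifying that the recovered coefficient $\mathsf{B}_n(t)$ is genuinely a \emph{stationary} covariance matrix function on $\mathbb{T}$ rather than merely a matrix-valued function with the correct value at $t=0$. One must verify that for every $n$ the full positive-definiteness inequality in $(\mathbf{x}_i,t_i)$ descends, through the Funk--Hecke orthogonality, to the positive-definiteness of the kernel $(t_i,t_j)\mapsto \mathsf{B}_n(t_i-t_j)$ on $\mathbb{T}$. The cleanest route is to choose, for a fixed mode $n$, coefficient vectors $\mathbf{a}_i$ and spatial points $\mathbf{x}_i$ that, after integrating against the Jacobi kernel, project the global nonnegativity onto the single $n$-th block; this should be feasible because distinct modes are mutually orthogonal by Lemma~\ref{lem:1} and hence the quadratic form decouples across $n$. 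Once positive-definiteness in $t$ is secured, the symmetry $\mathsf{B}_n(-t)=\mathsf{B}_n(t)^{\top}$ follows from Theorem~\ref{th:3}, and an appeal to the matrix Bochner theorem (for $\mathbb{T}=\mathbb{R}$) or the matrix Herglotz theorem (for $\mathbb{T}=\mathbb{Z}$) completes the identification of $\mathsf{B}_n(t)$ as a stationary covariance matrix function.
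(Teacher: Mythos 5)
Your proposal is correct, and the two directions split as follows when compared with the paper. For sufficiency, the paper does not construct a field at all: it observes that each summand of \eqref{cov.matrix.fun.2} is the product of a stationary covariance matrix function on $\mathbb{T}$ and a scalar isotropic covariance function on $\mathbb{M}^d$, hence a covariance matrix function on the product space (closure of the class of covariances under products, sums and convergent limits, citing \cite{MR2774237}); your explicit construction $\mathbf{Z}(\mathbf{x};t)=\sum_n a_n\mathbf{V}_n(t)P_n^{(\alpha,\beta)}(\cos\rho(\mathbf{x},\mathbf{U}))$ is a legitimate alternative, and in fact it is exactly the content of the paper's Theorem~\ref{th:5} (with the normalisation $\cov(\mathbf{V}_n(t_1),\mathbf{V}_n(t_2))=a_n^2\mathsf{B}_n(t_1-t_2)$ absorbed differently); it buys you a concrete realisation at the cost of having to verify mean-square convergence, which you correctly do via $\lvert P_n^{(\alpha,\beta)}(\cos\vartheta)\rvert\le P_n^{(\alpha,\beta)}(1)$. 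For necessity, your idea is the same as the paper's, but the paper executes the projection more cleanly than your ``choose $\mathbf{a}_i$ and $\mathbf{x}_i$ to isolate the $n$-th block'' sketch: it introduces an auxiliary uniform $\mathbf{U}$ independent of $\mathbf{Z}$ and forms the genuine stochastic process
\[
\mathbf{W}_n(t)=\int_{\mathbb{M}^d}\mathbf{Z}(\mathbf{x};t)\,P_n^{(\alpha,\beta)}(\cos\rho(\mathbf{x},\mathbf{U}))\,\mathrm{d}\mathbf{x},
\]
whose covariance, by Lemmas~\ref{lem:1} and \ref{lem:2}, equals $(\omega_d/a_n^2)^2P_n^{(\alpha,\beta)}(1)\,\mathsf{B}_n(t_1-t_2)$; since this is the covariance of an honest stationary process, $\mathsf{B}_n$ is a stationary covariance matrix function with no further appeal to Bochner or Herglotz needed. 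Your kernel-level version (descend the positive-definiteness of $\mathsf{C}$ through the double integral against $P_n^{(\alpha,\beta)}(\cos\rho(\cdot,\mathbf{u}))$, approximating by Riemann sums) does work, but it is precisely the deterministic shadow of the paper's $\mathbf{W}_n$ argument and requires the limiting step you only gesture at; the convergence of $\sum_n\mathsf{B}_n(0)P_n^{(\alpha,\beta)}(1)$ the paper gets directly from the finiteness of $\mathsf{C}(0;0)=\var[\mathbf{Z}(\mathbf{x};t)]$ rather than via Theorem~\ref{th:2}.
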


As an example of (\ref{cov.matrix.fun.2}), let
   $$  \mathsf{B}_n (t) = \left\{
                                           \begin{array}{ll}
                                           \mathsf{\Sigma}_n+\mathsf{\Phi}        \mathsf{\Sigma}_n  \mathsf{\Phi}^{\top},  ~  &  ~ t =0, \\
                                            \mathsf{\Phi}        \mathsf{\Sigma}_n,   ~   &  ~  t = -1, \\
                                            \mathsf{\Sigma}_n  \mathsf{\Phi}^{\top},  ~  &  ~ t = 1, \\
                                            \mathsf{0},  ~ & ~  t = \pm 2, \pm 3, \ldots,  ~~ n \in \mathbb{N}_0,
                                            \end{array}    \right. $$
                                            where $\{ \mathsf{\Sigma}_n: n \in \mathbb{N}_0 \}$ is a sequence
                                            of $m \times m$ nonnegative definite matrices and
                                            $\sum\limits_{n=0}^\infty  \mathsf{\Sigma}_n $ $\times P_n^{ (\alpha, \beta) } (1)$   converges. In this case, (\ref{cov.matrix.fun.2}) is the covariance matrix function of   an  $m$-variate Gaussian or elliptically contoured random field
        on $ \mathbb{M}^d \times  \mathbb{Z}$.

Gaussian and second-order elliptically contoured  random fields  form one of the largest sets, if not the largest set,
          which allows any possible correlation structure \cite{MR2774237}.
          The covariance matrix functions developed in Theorem~\ref{th:4}  can be adopted
          for a Gaussian or elliptically contoured vector random field.
          However, they may not be available for other non-Gaussian  random fields, such as  a log-Gaussian \cite{10.1007/978-94-015-6844-9_2}, $\chi^2$ \cite{5957245}, K-distributed \cite{MR3041387}, or skew-Gaussian one,  for which  admissible correlation structure must be investigated on a case-by-case basis.
          A series representation is given in the following theorem for an $m$-variate spatio-temporal random field on $\mathbb{M}^d\times\mathbb{T}$.

\begin{theorem}\label{th:5}
 An  $m$-variate  random field
       \begin{equation}
     \label{stoc1}
      \mathbf{Z} (\mathbf{x}; t) = \sum_{n=0}^\infty   \mathbf{V}_n (t)  P_n^{  (\alpha, \beta) } ( \cos \rho (\mathbf{x},  \mathbf{U})),
      ~~~~~~ \mathbf{x}   \in \mathbb{M}^d, ~ t \in \mathbb{T},
      \end{equation}
    is isotropic and mean square continuous on $\mathbb{M}^d$,  stationary on $\mathbb{T}$,  and possesses   mean   $\mathbf{0}$ and  covariance matrix function
 \emph{(\ref{cov.matrix.fun.2})}, where
 $\{ \,\mathbf{V}_n (t)\colon  n \in \mathbb{N}_0 \,  \}$ is a sequence of independent $m$-variate  stationary stochastic processes  on $\mathbb{T}$  with
$$ \mathsf{E} ( \mathbf{V}_n )= \mathbf{0},  ~~~  \cov ( \mathbf{V}_n (t_1), \mathbf{V}_n (t_2) ) =  a_n^2 \mathsf{B}_n (t_1-t_2),   ~~~ n \in \mathbb{N}_0, $$
 the   random vector  $\mathbf{U}$  is  uniformly distributed on $\mathbb{M}^d$ and is independent with    $\{\, \mathbf{V}_n (t) \colon  $  $ n \in \mathbb{N}_0\,  \}$,  and
        $\sum\limits_{n=0}^\infty \mathsf{B}_n (0)  P_n^{ (\alpha, \beta) } (1)$ converges.
\end{theorem}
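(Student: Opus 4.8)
The plan is to follow the template of Theorem~\ref{th:1}, now carrying the temporal variable through the stationary processes $\mathbf{V}_n(t)$ and exploiting the two orthogonality mechanisms already at our disposal: independence of the $\mathbf{V}_n$ across $n$ on the temporal side, and the Funk--Hecke identity of Lemma~\ref{lem:1} on the spatial side, with expectations factoring because $\mathbf{U}$ is independent of $\{\,\mathbf{V}_n\colon n\in\mathbb{N}_0\,\}$.

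First I would establish that the series (\ref{stoc1}) converges in mean square, so that $\mathbf{Z}(\mathbf{x};t)$ is a well-defined $L^2$ random field. Using $\mathbf{U}\perp\{\mathbf{V}_n\}$ to factor expectations and Lemma~\ref{lem:1} with $\mathbf{x}_1=\mathbf{x}_2=\mathbf{x}$, the squared $L^2$-norm of the $n$th summand is $\mathsf{E}[\|\mathbf{V}_n(t)\|^2]\,\mathsf{E}[(P_n^{(\alpha,\beta)}(\cos\rho(\mathbf{x},\mathbf{U})))^2]=\mathrm{tr}(a_n^2\mathsf{B}_n(0))\cdot a_n^{-2}P_n^{(\alpha,\beta)}(1)=\mathrm{tr}(\mathsf{B}_n(0))\,P_n^{(\alpha,\beta)}(1)$. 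Since distinct summands are uncorrelated (the cross expectation $\mathsf{E}[\mathbf{V}_i(t)\mathbf{V}_j(t)^{\top}]$ vanishes for $i\neq j$ by independence and zero mean), the tail of the series has squared norm $\sum_{n\geq N}\mathrm{tr}(\mathsf{B}_n(0))\,P_n^{(\alpha,\beta)}(1)$, which tends to $0$ by the assumed convergence of $\sum_n\mathsf{B}_n(0)P_n^{(\alpha,\beta)}(1)$ (the diagonal entries of the nonnegative-definite $\mathsf{B}_n(0)$ being nonnegative). This yields the Cauchy property and hence mean-square convergence.

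Next I would compute the first two moments. The mean is immediate: $\mathsf{E}[\mathbf{Z}(\mathbf{x};t)]=\sum_n\mathsf{E}[\mathbf{V}_n(t)]\,\mathsf{E}[P_n^{(\alpha,\beta)}(\cos\rho(\mathbf{x},\mathbf{U}))]=\mathbf{0}$. For the covariance, I would expand $\mathbf{Z}(\mathbf{x}_1;t_1)\mathbf{Z}(\mathbf{x}_2;t_2)^{\top}$, take expectations term by term (legitimate by the $L^2$-convergence just shown, since the covariance bilinear form is continuous on $L^2\times L^2$), and factor each term as $\mathsf{E}[\mathbf{V}_i(t_1)\mathbf{V}_j(t_2)^{\top}]\,\mathsf{E}[P_i^{(\alpha,\beta)}(\cos\rho(\mathbf{x}_1,\mathbf{U}))P_j^{(\alpha,\beta)}(\cos\rho(\mathbf{x}_2,\mathbf{U}))]$. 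The first factor equals $\delta_{ij}a_i^2\mathsf{B}_i(t_1-t_2)$ by independence across $n$ and the prescribed covariance of $\mathbf{V}_n$; the second equals $\delta_{ij}a_i^{-2}P_i^{(\alpha,\beta)}(\cos\rho(\mathbf{x}_1,\mathbf{x}_2))$ by Lemma~\ref{lem:1} together with the uniformity of $\mathbf{U}$. The factors $a_i^2$ cancel and only the diagonal $i=j=n$ survives, producing exactly (\ref{cov.matrix.fun.2}) with $t=t_1-t_2$.

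Finally I would read off the remaining properties from this covariance. Its spatial dependence only through $\rho(\mathbf{x}_1,\mathbf{x}_2)$ and its temporal dependence only through $t_1-t_2$, together with the constant zero mean, give wide-sense isotropy over $\mathbb{M}^d$ and stationarity over $\mathbb{T}$. For mean square continuity, the bound $|P_n^{(\alpha,\beta)}(\cos\vartheta)|\leq P_n^{(\alpha,\beta)}(1)$ makes (\ref{cov.matrix.fun.2}) converge uniformly in $\rho$, so each diagonal entry $\mathsf{C}_{kk}(\rho;0)$ is continuous in $\rho$; then $\mathsf{E}[|Z_k(\mathbf{x}_1;t)-Z_k(\mathbf{x}_2;t)|^2]=2[\mathsf{C}_{kk}(0;0)-\mathsf{C}_{kk}(\rho(\mathbf{x}_1,\mathbf{x}_2);0)]\to 0$ as $\rho(\mathbf{x}_1,\mathbf{x}_2)\to 0$. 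The main obstacle is the very first step: one must secure the mean-square convergence of (\ref{stoc1}) rigorously before any term-by-term moment computation is permitted, and this rests on combining the temporal and spatial orthogonality so that the single uniform dominating quantity $\mathrm{tr}(\mathsf{B}_n(0))P_n^{(\alpha,\beta)}(1)$ emerges; once its summability is in hand, every subsequent interchange of summation and expectation is justified by $L^2$-continuity of the inner product.
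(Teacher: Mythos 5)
Your proposal is correct and follows essentially the same route as the paper's proof: mean-square convergence of (\ref{stoc1}) via the Cauchy criterion, using the independence of $\mathbf{U}$ from $\{\mathbf{V}_n\}$ to factor expectations and the orthogonality $\mathsf{E}[P_i^{(\alpha,\beta)}(\cos\rho(\mathbf{x}_1,\mathbf{U}))P_j^{(\alpha,\beta)}(\cos\rho(\mathbf{x}_2,\mathbf{U}))]=\delta_{ij}a_i^{-2}P_i^{(\alpha,\beta)}(\cos\rho(\mathbf{x}_1,\mathbf{x}_2))$ (your Lemma~\ref{lem:1} plus uniformity of $\mathbf{U}$ is exactly the content of Lemma~\ref{lem:2}, which the paper invokes), followed by the term-by-term covariance computation in which the $a_n^{2}$ and $a_n^{-2}$ cancel to give (\ref{cov.matrix.fun.2}). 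Your explicit trace-based summability bound and the spelled-out mean-square-continuity step merely make precise what the paper leaves implicit.
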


The distinct terms of (\ref{stoc1}) are uncorrelated each other,
           $$ \cov  \left(   \mathbf{V}_i (t) P_i^{ (\alpha, \beta) } ( \cos \rho (\mathbf{x},  \mathbf{U}) ),  ~
                          \mathbf{V}_j  (t) P_j^{ (\alpha, \beta) } ( \cos \rho (\mathbf{x},  \mathbf{U}) )  \right) = \mathbf{0}, ~~~ \mathbf{x} \in \mathbb{M}^d, ~ t \in \mathbb{T},  i \neq j, $$
                           due to Lemma~\ref{lem:2} and the independent assumption among $\mathbf{U}, \mathbf{V}_i (t), \mathbf{V}_j (t)$. The vector stochastic process  $\mathbf{V}_n (t)$ can be expressed as, in terms of $\mathbf{Z} (\mathbf{x}; t)$ and $\mathbf{U}$,
                           $$  \mathbf{V}_n (t) =   \frac{a^2_n}{\omega_d P_n^{ (\alpha, \beta) } (1)}
             \int_{\mathbb{M}^d}  \mathbf{Z} (\mathbf{x}; t) P_n^{ (\alpha, \beta) } (\cos \rho (\mathbf{x},  \mathbf{U})) d \mathbf{x},  ~~~~~ t \in \mathbb{T}, ~  n \in \mathbb{N}_0, $$
where the integral  is understood as a Bochner integral of a function taking values in the Hilbert space of random vectors $\mathbf{Z}\in\mathbb{R}^m$ with $\mathsf{E}[\|\mathbf{Z}\|^2_{\mathbb{R}^m}]<\infty$.

 It is obtained after  we multiply both sides of (\ref{stoc1}) by $P_n^{ (\alpha, \beta) } (\cos \rho (\mathbf{x},  \mathbf{U}))$,   integrate  over $\mathbb{M}^d$, and  apply Lemma~\ref{lem:2},
\[
\begin{aligned}
&\int_{\mathbb{M}^d}  \mathbf{Z} (\mathbf{x}; t) P_n^{ (\alpha, \beta) } (\cos \rho (\mathbf{x},  \mathbf{U})) d \mathbf{x}\\
&\quad= \sum_{k=0}^\infty  \mathbf{V}_n  (t)  \int_{\mathbb{M}^d} P_k^{ (\alpha, \beta) } ( \cos \rho (\mathbf{x},  \mathbf{U}) )
                       P_n^{(\alpha, \beta) } ( \cos \rho (\mathbf{x},  \mathbf{U})) d \mathbf{x}  \\
     &\quad=  \frac{1}{a_n^2}     P_n^{ (\alpha, \beta) } (1) \mathbf{V}_n (t).
\end{aligned}
\]

\appendix
{\scriptsize

\section{Proofs}\label{sec:proofs}

\begin{proof}[Proof of Lemma~\emph{\ref{lem:3}}]
To calculate $\mu(\mathbb{M}^d)$, we use the result of \cite{MR0390968}. If all the geodesics on a $d$-di\-men\-si\-o\-nal Riemannian manifold $M$ are closed and have length $2\pi L$, then the ratio
\[
i(M)=\frac{\mu(\mathbb{M}^d)}{L^n\mu(\mathbb{S}^d)}
\]
is an integer. With our convention $L=1$, we obtain $\mu(\mathbb{M}^d)=i(\mathbb{M}^d)\mu(\mathbb{S}^d)$. It is well-known that
\begin{equation}\label{eq:2}
\mu(\mathbb{S}^d)=\frac{2\pi^{(d+1)/2}}{\Gamma((d+1)/2)}
=\frac{2\pi^{\alpha+3/2}}{\Gamma(\alpha+3/2)}.
\end{equation}
The \emph{Weinstein's integers} $i(\mathbb{M}^d)$ are shown in the last column of Table~\ref{tab:2}. Following \cite{MR776403}, consider all the geodesics from $\mathbf{o}$ to a point in $\mathbb{A}$. Draw a tangent line to each of them and denote by $e$ the dimension of the linear space generated by these lines. We have $e=d$ for $\mathbb{S}^d$, $1$ for $P^d(\mathbb{R})$, $2$ for $P^d(\mathbb{C})$, $4$ for $P^d(\mathbb{H})$, and $8$ for $P^2(\mathbb{O})$. It is proved in \cite{MR776403} that
\[
i(\mathbb{M}^d)=\frac{2^{d-1}\Gamma((d+1)/2)\Gamma(e/2)}{\sqrt{\pi}\Gamma((d+e)/2)}
\]
We know that $d=2\alpha+2$. It is easy to check that $e=2\beta+2$, then we obtain
\[
i(\mathbb{M}^d)=\frac{2^{2\alpha+1}\Gamma(\alpha+3/2)\Gamma(\beta+1)}{\sqrt{\pi}\Gamma(\alpha+\beta+2)},
\]
and \eqref{eq:7} easily follows.
\end{proof}

\begin{proof}[Proof of Lemma~\ref{lem:1}]
In Theorem 2.1 of \cite{MR3722488}  put
\[
K(x)=P_i^{ (\alpha,\beta)} (x),\qquad S(\mathbf{x})=P_j^{(\alpha,\beta)} (\cos(\rho(\mathbf{x}_2,\mathbf{x}))). 
\]
We obtain
\begin{eqnarray*}
& & \int_{\mathbb{M}^d}P_i^{ (\alpha,\beta)} (\cos(\rho(\mathbf{x}_1,\mathbf{x})))
P_j^{ (\alpha,\beta)} (\cos(\rho(\mathbf{x}_2,\mathbf{x})))\,\mathrm{d}\mathbf{x} \\
& = & \omega_d  P_j ^{(\alpha,\beta)} (\cos(\rho(\mathbf{x}_1,\mathbf{x}_2))) \int_{-1}^{1} \frac{P_i^{(\alpha,\beta)} (x)}{ P_i^{(\alpha,\beta)} (1)} P_j^{(\alpha,\beta)} (x)
\mathrm{d}\nu_{\alpha,\beta}(x)  \\
& = & \omega_d  \frac{\delta_{ij}}{a_i^2} P_i^{(\alpha,\beta)} (\cos(\rho(\mathbf{x}_1,\mathbf{x}_2))),
\end{eqnarray*}
where the last equality follows from \eqref{eq:4} , (\ref{Gegenbauer.orth}), and the following well-known result: the probabilistic measure $\nu_{\alpha,\beta}$ on $[-1,1]$, proportional to $(1-x)^{\alpha}(1+x)^{\beta}\,\mathrm{d}x$, is
\begin{equation}\label{eq:1}
\mathrm{d}\nu_{\alpha,\beta}(x)=\frac{\Gamma(\alpha+\beta+2)}{2^{\alpha+\beta+1}
\Gamma(\alpha+1)\Gamma(\beta+1)}(1-x)^{\alpha}(1+x)^{\beta}\,\mathrm{d}x.
\end{equation}

\end{proof}

\begin{proof}[Proof of Lemma~\ref{lem:2}]
The mean function of $\{\, Z_n (\mathbf{x})\colon \mathbf{x} \in \mathbb{M}^d\, \}$ is obtained by  applying
  of \cite[Theorem 2.1]{MR3722488}  to $K(x)=1$ and $S(\mathbf{x})=P^{(\alpha,\beta)}_n (\cos(\rho(\mathbf{x},\mathbf{y})))$,
\[
    \mathsf{E}[Z_n (\mathbf{x})] =  a_n\omega_d \int_{\mathbb{M}^d} P_n^{(\alpha,\beta)} (\cos(\rho(\mathbf{x},\mathbf{y})))
\,\mathrm{d}\mathbf{y} = a_n \cdot 0  =  0.
\]
The covariance function is
 \begin{eqnarray*}
\cov ( Z_n (\mathbf{x}_1), Z_n(\mathbf{x}_2) )
& = & \omega_d^{-1}a_n^2\int_{\mathbb{M}^d} P_n^{ (\alpha,\beta)} (\cos(\rho ( \mathbf{x}_1, \mathbf{z}))
P_n^{ (\alpha,\beta)}  (\cos(\rho(\mathbf{x}_2,\mathbf{z})))\,\mathrm{d}\mathbf{z}\\
& = & P_n^{(\alpha,\beta)} (\cos(\rho(\mathbf{x}_1, \mathbf{x}_2) ),
\end{eqnarray*}
by Lemma~\ref{lem:1}. Equation (\ref{eq:6})  easily follows from the same lemma.
\end{proof}

\begin{proof}[Proof of Theorem~\ref{th:1}]
The series at the right-hand side of (\ref{spatiostoc1}) converges in mean square for every $\mathbf{x} \in \mathbb{M}^d$ since
      \begin{eqnarray*}
      &  &  \mathsf{E}\left[  \left(  \sum_{i=n_1}^{n_1+n_2}   \mathsf{B}_i^{\frac{1}{2}} \mathbf{V}_i P_i^{ (\alpha, \beta) } ( \cos \rho (\mathbf{x},  \mathbf{U} )) \right)
                       \left(  \sum_{j=n_1}^{n_1+n_2}   \mathsf{B}_j^{\frac{1}{2}} \mathbf{V}_j  P_j^{ (\alpha, \beta) } ( \cos \rho (\mathbf{x},  \mathbf{U} )) \right)^{\top}\right] \\
      & = &         \sum_{i=n_1}^{n_1+n_2}     \sum_{j=n_1}^{n_1+n_2}
                        \mathsf{B}_i^{\frac{1}{2}} \mathsf{B}_j^{\frac{1}{2}}  \mathsf{E}[ ( \mathbf{V}_i \mathbf{V}^{\top}_j)]
                        \mathsf{E} \left[\left(  P_i^{ (\alpha, \beta) } ( \cos \rho (\mathbf{x},  \mathbf{U} ))   P_j^{ (\alpha, \beta) } ( \cos \rho (\mathbf{x},  \mathbf{U} )) \right)\right] \\
      & = &    \sum_{i=n_1}^{n_1+n_2}   \mathsf{B}_i \sigma_i^2    \mathsf{E} \left[\left(  P_i^{ (\alpha, \beta) } ( \cos \rho (\mathbf{x},  \mathbf{U} ))   P_i^{ (\alpha, \beta) } ( \rho (\mathbf{x},  \mathbf{U} )) \right)\right] \\
      & = &    \sum_{i=n_1}^{n_1+n_2}   \mathsf{B}_i    P_i^{ (\alpha, \beta) } ( 1)  \\
      & \to & \mathbf{0},  ~~~~ \mbox{as} ~ n_1, n_2 \to \infty,
      \end{eqnarray*}
      where the second equality follows from the independent assumption between $\{\, \mathbf{V}_n\colon n \in \mathbb{N}_0\, \}$ and $\mathbf{U}$, and the fourth from Lemma~\ref{lem:2}. Thus (\ref{spatiostoc1}) is an $m$-variate second-order random field.
      Its mean function is clearly identical to $\mathbf{0}$, and it covariance function is
          \begin{eqnarray*}
          &  &  \cov \left( \sum_{i=0}^\infty  \mathsf{B}_i^{\frac{1}{2}} \mathbf{V}_i P_i^{ (\alpha, \beta) } ( \cos \rho (\mathbf{x}_1,  \mathbf{U} )),
                                  ~  \sum_{j=0}^\infty  \mathsf{B}_j^{\frac{1}{2}} \mathbf{V}_j P_j^{ (\alpha, \beta) } ( \cos \rho (\mathbf{x}_2,  \mathbf{U} )) \right) \\
          & = &     \sum_{i=0}^\infty     \sum_{j=0}^\infty
                       \mathsf{B}_i^{\frac{1}{2}} \mathsf{B}_j^{\frac{1}{2}}   \mathsf{E}\left[( \mathbf{V}_i \mathbf{V}^{\top}_j)]
                        \mathsf{E}[ \left(  P_i^{ (\alpha, \beta)}  ( \cos \rho (\mathbf{x},  \mathbf{U} ))   P_j^{ (\alpha, \beta) } ( \cos \rho (\mathbf{x},  \mathbf{U} )) \right)\right] \\
          & = &     \sum_{i=0}^\infty
                       \mathsf{B}_i \sigma_i^2
                        \mathsf{E} \left[\left(  P_i^{ (\alpha, \beta) } ( \cos \rho (\mathbf{x}_1,  \mathbf{U} ))   P_i^{ (\alpha, \beta) } ( \cos \rho (\mathbf{x}_2,  \mathbf{U} )) \right)\right] \\
          & = &       \sum_{i=0}^\infty
                       \mathsf{B}_i        P_i^{ (\alpha, \beta) } ( \cos \rho (\mathbf{x}_1,  \mathbf{x}_2)),  ~~~~ \mathbf{x}_1, \mathbf{x}_2 \in \mathbb{M}^d.
          \end{eqnarray*}
      Two distinct terms of (\ref{spatiostoc1}) are obviously uncorrelated each other.
\end{proof}

\begin{proof}[Proof of Theorem~\ref{th:2}]
It suffices to verify (\ref{spatio.cmf2}) to be a general form, since in Theorem~\ref{th:1} we already construct an $m$-variate isotropic random field on $\mathbb{M}^d$ whose covariance matrix function is (\ref{spatio.cmf2}).  To this end, suppose that $\{\, \mathbf{Z}(\mathbf{x})\colon \mathbf{x} \in \mathbb{M}^d\, \}$ is an $m$-variate isotropic and mean square continuous random
   field. Then, for an arbitrary $\mathbf{a} \in \mathbb{R}^m$, $\{\, \mathbf{a}^{\top} \mathbf{Z}(\mathbf{x})\colon \mathbf{x} \in \mathbb{M}^d\, \}$ is a scalar isotropic and mean square continuous random
   field, so that its covariance function has to be of the form (\ref{scalar.cov}),
       \begin{equation}
       \label{eq.Th2.proof1}
          \cov ( \mathbf{a}^{\top} \mathbf{Z}(\mathbf{x}_1), \mathbf{a}^{\top} \mathbf{Z}(\mathbf{x}_2) )
        = \sum_{n=0}^\infty b_n (\mathbf{a})
            P_n^{ (\alpha, \beta) } ( \cos \rho (\mathbf{x}_1,  \mathbf{x}_2)),  ~~~~ \mathbf{x}_1, \mathbf{x}_2 \in \mathbb{M}^d,
          \end{equation}
       where $\{\, b_n (\mathbf{a})\colon n \in \mathbb{N}_0\, \}$ is a sequence of nonnegative constants and $\sum\limits_{n=0}^\infty b_n (\mathbf{a})
            P_n^{ (\alpha, \beta) } ( 1 )$ converges. Similarly, for $\mathbf{b} \in \mathbb{R}^m$, we obtain
         \begin{eqnarray*}
       &  &  \frac{1}{4} \cov ( (\mathbf{a} +\mathbf{b})^{\top} \mathbf{Z}(\mathbf{x}_1), (\mathbf{a}+\mathbf{b})^{\top} \mathbf{Z}(\mathbf{x}_2) )
        = \sum_{n=0}^\infty b_n (\mathbf{a}+\mathbf{b})
            P_n^{ (\alpha, \beta) } ( \cos \rho (\mathbf{x}_1,  \mathbf{x}_2)),  \\
       &  &   \frac{1}{4} \cov ( (\mathbf{a} -\mathbf{b})^{\top} \mathbf{Z}(\mathbf{x}_1), (\mathbf{a}-\mathbf{b})^{\top} \mathbf{Z}(\mathbf{x}_2) )
        = \sum_{n=0}^\infty b_n (\mathbf{a}-\mathbf{b})
            P_n^{ (\alpha, \beta) } ( \cos \rho (\mathbf{x}_1,  \mathbf{x}_2)),  ~~ \mathbf{x}_1, \mathbf{x}_2 \in \mathbb{M}^d.
         \end{eqnarray*}
       Taking the difference between the last two equations yields
           \begin{eqnarray*}
           &  & \frac{1}{2} ( \mathbf{a}^{\top}  \cov ( \mathbf{Z}(\mathbf{x}_1),  \mathbf{Z}(\mathbf{x}_2) ) \mathbf{b}+
                                      \mathbf{b}^{\top}  \cov (  \mathbf{Z}(\mathbf{x}_1),  \mathbf{Z}(\mathbf{x}_2) ) \mathbf{a} ) \\
         & = & \frac{1}{2} ( \cov ( \mathbf{a}^{\top} \mathbf{Z}(\mathbf{x}_1), \mathbf{b}^{\top} \mathbf{Z}(\mathbf{x}_2) ) +\cov ( \mathbf{b}^{\top} \mathbf{Z}(\mathbf{x}_1), \mathbf{a}^{\top} \mathbf{Z}(\mathbf{x}_2) )) \\
         & = & \sum\limits_{n=0}^\infty (b_n (\mathbf{a}+\mathbf{b}) -b_n (\mathbf{a}-\mathbf{b}))
            P_n^{ (\alpha, \beta) } ( \cos \rho (\mathbf{x}_1,  \mathbf{x}_2)),  ~~ \mathbf{x}_1, \mathbf{x}_2 \in \mathbb{M}^d,
            \end{eqnarray*}
       or
         \begin{equation}
       \label{eq.Th2.proof2}
\mathbf{a}^{\top}  \cov (  \mathbf{Z}(\mathbf{x}_1),  \mathbf{Z}(\mathbf{x}_2) ) \mathbf{b}
        = \sum\limits_{n=0}^\infty (b_n (\mathbf{a}+\mathbf{b}) -b_n (\mathbf{a}-\mathbf{b}))
            P_n^{ (\alpha, \beta) } ( \cos \rho (\mathbf{x}_1,  \mathbf{x}_2)),  ~~ \mathbf{x}_1, \mathbf{x}_2 \in \mathbb{M}^d,
       \end{equation}
       noticing that $\cov (  \mathbf{Z}(\mathbf{x}_1),  \mathbf{Z}(\mathbf{x}_2) ) $ is a symmetric matrix.
      The form (\ref{spatio.cmf2}) of $\cov (  \mathbf{Z}(\mathbf{x}_1),  \mathbf{Z}(\mathbf{x}_2) ) $  is now confirmed by letting the $i$th entry of $\mathbf{a}$ and the $j$th entry of $\mathbf{b}$ be 1 and the rest vanish
      in (\ref{eq.Th2.proof2}).
      It remains to verify the nonnegative definiteness of each $\mathsf{B}_n$ in (\ref{spatio.cmf2}). To do so, we multiply its both sides by $\mathbf{a}^{\top}$ from the left and $\mathbf{a}$ from the right, and obtain
      $$  \mathbf{a}^{\top} \mathsf{C} ( \mathbf{x}_1, \mathbf{x}_2 )  \mathbf{a} = \sum_{n=0}^\infty  \mathbf{a}^{\top}  \mathsf{B}_n  \mathbf{a} P_n^{ (\alpha, \beta) } \left( \cos  \rho (\mathbf{x}_1, \mathbf{x}_2)  \right), ~~   \mathbf{x}_1, \mathbf{x}_2 \in \mathbb{M}^d, $$
      comparing which with (\ref{eq.Th2.proof1}) results in that $\mathbf{a}^{\top}  \mathsf{B}_n  \mathbf{a} \ge 0$ or  the nonnegative definiteness of  $\mathsf{B}_n$,
      $n \in \mathbb{N}_0$, and the convergence of $\sum\limits_{n=0}^\infty  \mathbf{a}^{\top}  \mathsf{B}_n  \mathbf{a} P_n^{ (\alpha, \beta)} (1)$ or
      that of each entry of  the matrix $\sum\limits_{n=0}^\infty  \mathsf{B}_n   P_n^{ (\alpha, \beta) } (1)$.
\end{proof}

\begin{proof}[Proof of Theorem~\ref{th:3}]
For a fixed $t \in \mathbb{T}$, consider a random field $\left\{\,  \mathbf{Z} (\mathbf{x}; 0) + \mathbf{Z} (\mathbf{x}; t)\colon  \mathbf{x} \in \mathbb{M}^d \,\right\}$.
It is isotropic and mean square continuous on $\mathbb{M}^d$, with  covariance matrix function
\[
\begin{aligned}
\cov \left(     \mathbf{Z} (\mathbf{x}_1; 0) + \mathbf{Z} (\mathbf{x}_1; t), ~ \mathbf{Z} (\mathbf{x}_2; 0) + \mathbf{Z} (\mathbf{x}_2; t)       \right)
&=\mathsf{C} (\rho (\mathbf{x}_1, \mathbf{x}_2); 0) +  \mathsf{C} (\rho (\mathbf{x}_1, \mathbf{x}_2); t) +  \mathsf{C} (\rho (\mathbf{x}_1, \mathbf{x}_2); -t)\\
&=\sum_{n=0}^\infty \mathsf{B}_{n+} (t)   P_n^{(\alpha, \beta) } (\cos \rho (\mathbf{x}_1, \mathbf{x}_2)),   ~~ \mathbf{x}_1, \mathbf{x}_2 \in \mathbb{M}^d,
\end{aligned}
\]
where the last equality follows from Theorem~\ref{th:2},  $\{\, \mathsf{B}_{n+} (t)\colon n \in \mathbb{N}_0\, \}$ is a sequence of nonnegative-definite matrices,
      and  $   \sum\limits_{n=0}^\infty \mathsf{B}_{n+} (t)   P_n^{(\alpha, \beta)} (1)$ converges. Similarly, we have
\[
\begin{aligned}
\cov \left(     \mathbf{Z} (\mathbf{x}_1; 0) - \mathbf{Z} (\mathbf{x}_1; t), ~ \mathbf{Z} (\mathbf{x}_2; 0) - \mathbf{Z} (\mathbf{x}_2; t)       \right)
&=2  \mathsf{C} (\rho (\mathbf{x}_1, \mathbf{x}_2); 0) -  \mathsf{C} (\rho (\mathbf{x}_1, \mathbf{x}_2); t) -  \mathsf{C} (\rho (\mathbf{x}_1, \mathbf{x}_2); -t)\\
&=\sum_{n=0}^\infty \mathsf{B}_{n-} (t)   P_n^{ (\alpha, \beta) } (\cos \rho (\mathbf{x}_1, \mathbf{x}_2)),
\end{aligned}
\]
     and, thus,
\[
\begin{aligned}
\frac{\mathsf{C} (\rho (\mathbf{x}_1, \mathbf{x}_2); t) +  \mathsf{C} (\rho (\mathbf{x}_1, \mathbf{x}_2); -t)}{2}&=\frac{1}{4} [ 2  \mathsf{C} (\rho (\mathbf{x}_1, \mathbf{x}_2); 0) +  \mathsf{C} (\rho (\mathbf{x}_1, \mathbf{x}_2); t) +  \mathsf{C} (\rho (\mathbf{x}_1, \mathbf{x}_2); -t)]\\
&\quad - \frac{1}{4} [  2  \mathsf{C} (\rho (\mathbf{x}_1, \mathbf{x}_2); 0) -  \mathsf{C} (\rho (\mathbf{x}_1, \mathbf{x}_2); t) -  \mathsf{C} (\rho (\mathbf{x}_1, \mathbf{x}_2); -t) ] \\
&=\sum_{n=0}^\infty \mathsf{B}_n (t)    P_n^{(\alpha, \beta)} (\cos \rho (\mathbf{x}_1, \mathbf{x}_2)),  ~~ \mathbf{x}_1, \mathbf{x}_2 \in \mathbb{M}^d,
\end{aligned}
\]
        which confirms the format (\ref{cov.matrix.fun.1}) for $\frac{\mathsf{C} (\rho (\mathbf{x}_1, \mathbf{x}_2); t) +  \mathsf{C} (\rho (\mathbf{x}_1, \mathbf{x}_2); -t)}{2}$, with
$ B_n (t) =\frac{ \mathsf{B}_{n+} (t) - \mathsf{B}_{n-} (t)}{4}$, $n \in \mathbb{N}_0$.
    Obviously, $\mathsf{B}_n(t)$ is symmetric,
        and   $\sum\limits_{n=0}^\infty  \mathsf{B}_n (t)  P_n^{ (\alpha, \beta) } (1)$ converges.
        Moreover,   (\ref{cov.matrix.fun.1}) is  the covariance matrix function of  an $m$-variate  isotropic random field $\left\{\,  \frac{ \mathbf{Z} (\mathbf{x}; t)+\tilde{\mathbf{Z}} (\mathbf{x}; -t)}{\sqrt{2}}\colon \mathbf{x} \in \mathbb{M}^d, t \in \mathbb{T}\, \right\}$, where    $\{ \,\tilde{\mathbf{Z}} (\mathbf{x}; t)\colon \mathbf{x} \in \mathbb{M}^d, t \in \mathbb{T} \,\}$  is    an independent copy of  $\{\, \mathbf{Z} (\mathbf{x}; t)\colon \mathbf{x}  \in \mathbb{M}^d, t \in \mathbb{T}\, \}$. In fact,
\[
\begin{aligned}
\cov \left( \frac{ \mathbf{Z} (\mathbf{x}_1; t_1)+\tilde{\mathbf{Z}} (\mathbf{x}_1; -t_1)}{\sqrt{2}}, ~  \frac{ \mathbf{Z} (\mathbf{x}_2; t_2)+\tilde{\mathbf{Z}} (\mathbf{x}_2; -t_2)}{\sqrt{2}}  \right)&=\frac{ \mathsf{C} (\rho (\mathbf{x}_1, \mathbf{x}_2); t_1-t_2)+\mathsf{C} ( \rho (\mathbf{x}_1, \mathbf{x}_2); t_2-t_1)}{2}   \\
&=\sum_{k=0}^\infty \mathsf{B}_{k} (t_1-t_2)   P_k^{ (\alpha, \beta) } (\cos \rho (\mathbf{x}_1, \mathbf{x}_2))
\end{aligned}
\]
with $\mathbf{x}_1$, $\mathbf{x}_2 \in \mathbb{M}^d$, $t_1$, $t_2 \in \mathbb{T}$.

    For each fixed $n \in \mathbb{N}_0$,   in order to verify that $ \mathsf{B}_n (t) $ is a  stationary covariance matrix function on $\mathbb{T}$,
        we consider an $m$-variate  stochastic process
              $$ \mathbf{W}_n (t) = \int_{\mathbb{M}^d}  \frac{ \mathbf{Z} (\mathbf{x}; t)+\tilde{\mathbf{Z}} (\mathbf{x}; -t)}{\sqrt{2}}  P_n^{ (\alpha, \beta)  } (  \cos \rho (\mathbf{x}, \mathbf{U}) ) d \mathbf{x},  ~~~~~~~~~~~~~~ t \in \mathbb{T}, $$
        where $\{\, \tilde{\mathbf{Z}} (\mathbf{x}; t)\colon \mathbf{x} \in \mathbb{M}^d, t \in \mathbb{T}\, \}$ is an independent copy of  $\{\, \mathbf{Z} (\mathbf{x}; t)\colon \mathbf{x}  \in \mathbb{M}^d, t \in \mathbb{T}\, \}$,  $\mathbf{U}$ is a random vector uniformly distributed on $\mathbb{M}^d$,  and $\mathbf{U}$,  $\{\, \mathbf{Z} (\mathbf{x}; t)\colon \mathbf{x}  \in \mathbb{M}^d, t \in \mathbb{T}\, \}$ and  $\{\, \tilde{\mathbf{Z}} (\mathbf{x}; t)\colon \mathbf{x} \in \mathbb{S}^d, t \in \mathbb{T}\, \}$ are independent.
       By Lemma~\ref{lem:1},
         the  mean function of $\{\, \mathbf{W}_n (t)\colon  t \in \mathbb{T}\, \}$  is
          \begin{eqnarray*}
           \mathsf{E} [  \mathbf{W}_n (t)]
             & =  &  \left\{
                          \begin{array}{ll}
                          \sqrt{2} P^{(\alpha,\beta)}_0(1)\omega_d  \mathsf{E} [\mathbf{Z} (\mathbf{x}; t)],   ~   &    ~  n= 0,   \\
                          0,     ~   &     ~  n \in \mathbb{N},
                          \end{array}    \right.
                  \end{eqnarray*}
            and its   covariance matrix function is, by Lemmas~\ref{lem:1} and \ref{lem:2}
\[
\begin{aligned}
&\cov ( \mathbf{W}_n(t_1), ~  \mathbf{W}_n (t_2) ) = \frac{1}{\omega_d} \cov   \left(       \int_{\mathbb{M}^d}  \frac{ \mathbf{Z} (\mathbf{x}; t_1)+\tilde{\mathbf{Z}} (\mathbf{x}; -t_1)}{\sqrt{2}}  P_n^{ (\alpha, \beta)  } ( \cos  \rho( \mathbf{x}, \mathbf{U}) )  d \mathbf{x},\right.\\
&\quad\left.\int_{\mathbb{M}^d}  \frac{ \mathbf{Z} (\mathbf{y}; t_2)+\tilde{\mathbf{Z}} (\mathbf{y}; -t_2)}{\sqrt{2}}  P_n^{ (\alpha, \beta) } ( \cos  \rho( \mathbf{y}, \mathbf{U})) d \mathbf{y}                           \right) \\
&=\frac{1}{\omega_d} \int_{\mathbb{M}^d}    \cov   \left(       \int_{\mathbb{M}^d}  \frac{ \mathbf{Z} (\mathbf{x}; t_1)+\tilde{\mathbf{Z}} (\mathbf{x}; -t_1)}{\sqrt{2}}  P_n^{ (\alpha, \beta)  } ( \cos  \rho( \mathbf{x}, \mathbf{U}) )d \mathbf{x}, ~ \right. \\
&\quad\left.~~~~~~~~~~       \int_{\mathbb{M}^d}  \frac{ \mathbf{Z} (\mathbf{y}; t_2)+\tilde{\mathbf{Z}} (\mathbf{y}; -t_2)}{\sqrt{2}}  P_n^{ (\alpha, \beta) } (\cos  \rho( \mathbf{y}, \mathbf{u}) ) d \mathbf{y}                           \right)   d \mathbf{u}  \\
& =\frac{1}{\omega_d}   \int_{\mathbb{M}^d}      \int_{\mathbb{M}^d}        \int_{\mathbb{M}^d}   \cov \left( \frac{ \mathbf{Z} (\mathbf{x}; t_1)+\tilde{\mathbf{Z}} (\mathbf{x}; -t_1)}{\sqrt{2}}, ~  \frac{ \mathbf{Z} (\mathbf{y}; t_2)+\tilde{\mathbf{Z}} (\mathbf{y}; -t_2)}{\sqrt{2}}  \right)  \\
&\quad\times P_n^{ (\alpha, \beta) } (\cos  \rho( \mathbf{x}, \mathbf{u}) )    P_n^{ (\alpha, \beta) } (\cos  \rho( \mathbf{y}, \mathbf{u}) )  d \mathbf{x} d \mathbf{y}                             d \mathbf{u}  \\
&=\int_{\mathbb{M}^d}         \int_{\mathbb{M}^d}     \int_{\mathbb{M}^d}    \frac{ \mathsf{C} (\rho (\mathbf{x}, \mathbf{y}); t_1-t_2)+\mathsf{C} ( \rho (\mathbf{x}, \mathbf{y}); t_2-t_1)}{2\omega_d}
                       P_n^{ (\alpha, \beta) } ( \cos  \rho( \mathbf{x}, \mathbf{u}) )    P_n^{ (\alpha, \beta) } ( \cos  \rho( \mathbf{y}, \mathbf{u}) )  d \mathbf{x} d \mathbf{y}                             d \mathbf{u}  \\
\end{aligned}
\]
\[
\begin{aligned}
&=\frac{1}{\omega_d}    \int_{\mathbb{M}^d}        \int_{\mathbb{M}^d}      \int_{\mathbb{M}^d}      \sum_{k=0}^\infty \mathsf{B}_{k} (t_1-t_2)   P_k^{ (\alpha, \beta) } ( \cos  \rho( \mathbf{x}, \mathbf{y}))
                        P_n^{ (\alpha, \beta) } (\cos  \rho( \mathbf{x}, \mathbf{u}))    P_n^{ (\alpha, \beta) } (\cos  \rho( \mathbf{y}, \mathbf{u}))  d \mathbf{x} d \mathbf{y}                             d \mathbf{u}  \\
&=\frac{1}{\omega_d}        \sum_{k=0}^\infty \mathsf{B}_{k} (t_1-t_2)      \int_{\mathbb{M}^d}     \int_{\mathbb{M}^d}        \int_{\mathbb{M}^d}
                         P_k^{ (\alpha, \beta) } (\cos  \rho( \mathbf{x}, \mathbf{y}))
                        P_n^{ (\alpha, \beta) } ( \cos  \rho( \mathbf{x}, \mathbf{u}) )   d \mathbf{x}   P_n^{ (\alpha, \beta) } (\cos  \rho( \mathbf{y}, \mathbf{u}) )   d \mathbf{y}               d \mathbf{u}  \\
&= \frac{1}{\omega_d}       \mathsf{B}_{n} (t_1-t_2)      \int_{\mathbb{M}^d}   \frac{1}{a_n^2}  \int_{\mathbb{M}^d}
                         P_n^{ (\alpha, \beta) } (\cos  \rho( \mathbf{y}, \mathbf{u}))   P_n^{ (\alpha, \beta) } (\cos  \rho( \mathbf{y}, \mathbf{u}))   d \mathbf{y}                    d \mathbf{u}  \\
&= \frac{1}{\omega_d}     \mathsf{B}_{n} (t_1-t_2)      \int_{\mathbb{M}^d}   \left(  \frac{\omega_d}{a_n^2} \right)^2
                         P_n^{(\alpha, \beta) } (1)                d \mathbf{u}  \\
&=\mathsf{B}_{n} (t_1-t_2)       \left(  \frac{\omega_d}{a_n^2} \right)^2
                         P_n^{ (\alpha, \beta) } (1),    ~~~~~~ t_1, t_2 \in \mathbb{T},
\end{aligned}
\]
             which implies that  $ \mathsf{B}_n (t)$ is a  stationary covariance matrix function on $\mathbb{T}$.
\end{proof}

\begin{proof}[Proof of Theorem~\ref{th:4}]
The convergent assumption of  $\sum\limits_{n=0}^\infty  \mathsf{B}_n (0) P_n^{ (\alpha, \beta) } (1)$ ensures the uniform and absolute  convergence of the series at the right-hand side of (\ref{cov.matrix.fun.2}). If $\{\, \mathsf{B}_n (t)\colon  n \in \mathbb{N}_0\, \}$ is a sequence of stationary covariance matrix function on $\mathbb{T}$, then each term of  the series at the right-hand side of (\ref{cov.matrix.fun.2}) is the product of a
stationary covariance matrix function $\mathsf{B}_n (t)$ on $\mathbb{T}$ and an isotropic covariance   function $P_n^{ (\alpha, \beta) } (\cos \rho (\mathbf{x}_1, \mathbf{x}_2)$
on $\mathbb{M}^d$, and thus      (\ref{cov.matrix.fun.2}) can be treated  \cite{MR2774237} as the covariance matrix function of an $m$-variate random field on $\mathbb{M}^d \times \mathbb{T}$.

            On the other hand,         assume that (\ref{cov.matrix.fun.2}) is the covariance matrix function of an $m$-variate random field $\{\, \mathbf{Z} (\mathbf{x}; t)\colon \mathbf{x} \in \mathbb{M}^d, t \in \mathbb{T}\, \}$.
                The convergence of   $\sum\limits_{n=0}^\infty  \mathsf{B}_n (0)  P_n^{ (\alpha, \beta) } (1)$ results from
               the existence of $\mathsf{C} (0; 0) = \var [Z (\mathbf{x}; t)]$.
             In order to show that $\mathsf{B}_n (t)$ is a stationary covariance matrix function on $\mathbb{T}$  for each fixed $n \in \mathbb{N}_0$,  consider an $m$-variate  stochastic process
                  $$ \mathbf{W}_n (t) = \int_{\mathbb{M}^d} \mathbf{Z} (\mathbf{x}; t)  P_n^{   (\alpha, \beta) } ( \cos \rho (\mathbf{x}, \mathbf{U}))  d \mathbf{x},  ~~~~~~~~~ t \in \mathbb{T}, $$
              where $\mathbf{U}$ is a random vector uniformly distributed on $\mathbb{M}^d$ and is  independent with     $\{\, \mathbf{Z} (\mathbf{x}; t)\colon \mathbf{x} \in \mathbb{M}^d, t \in \mathbb{T} \,\}$.
              Similar to  that in the proof of Theorem~\ref{th:3},  applying Lemmas~\ref{lem:1} and \ref{lem:2} we  obtain that the covariance matrix function of $\{\, \mathbf{W}_n (t)\colon t \in \mathbb{T}\, \}$ is positively propositional to
              $ \mathsf{B}_n (t)$; more precisely,
                   \begin{eqnarray*}
                    \cov ( \mathbf{W}_n (t_1),   \mathbf{W}_n (t_2) )
                   & =  &    \mathsf{B}_{n} (t_1-t_2)       \left(  \frac{\omega_d}{a_n^2} \right)^2
                         P_n^{  (\alpha, \beta)  } (1),    ~~~~~~ t_1, t_2 \in \mathbb{T},
                   \end{eqnarray*}
            which implies that  $ \mathsf{B}_n (t)$ is a  stationary covariance matrix function on $\mathbb{T}$.
\end{proof}

\begin{proof}[Proof of Theorem~\ref{th:5}]
The convergent  assumption  of $\sum\limits_{n=0}^\infty \mathsf{B}_n (0)  P_n^{ (\alpha, \beta)  } (1)$
       ensures  the mean square convergence of the series at the right hand of (\ref{stoc1}), since
           \begin{eqnarray*}
      &  &      \mathsf{E}\left[ \left( \sum_{i=n_1}^{n_1+n_2}  \mathbf{V}_i (t)  P_i^{ (\alpha, \beta) } ( \cos  \rho( \mathbf{x}, \mathbf{U}) ) \right)
                  \left( \sum_{j=n_1}^{n_1+n_2}  \mathbf{V}_j (t) P_j^{ (\alpha, \beta)} ( \cos  \rho( \mathbf{x}, \mathbf{U})) \right)^{\top}\right] \\
      & =  &      \mathsf{E} \left[  \sum_{i=n_1}^{n_1+n_2}  \sum_{j=n_1}^{n_1+n_2}
           \mathbf{V}_i (t)   \mathbf{V}^{\top}_j  (t)   P_i^{ (\alpha, \beta) } (\cos  \rho( \mathbf{x}, \mathbf{U}))  P_j^{ (\alpha, \beta) } (\cos  \rho( \mathbf{x}, \mathbf{U})) \right] \\
      & =  &       \sum_{i=n_1}^{n_1+n_2}  \sum_{j=n_1}^{n_1+n_2}
           \mathsf{E} [\mathbf{V}_i  (t)  \mathbf{V}^{\top}_j (t) ]   \mathsf{E} \left[ P_i^{ (\alpha, \beta) } (\cos  \rho( \mathbf{x}, \mathbf{U}))  P_j^{ (\alpha, \beta) } (\cos  \rho( \mathbf{x}, \mathbf{U}))   \right] \\
      & =  &  \omega_d  \sum_{i=n_1}^{n_1+n_2}
           \mathsf{B}_i (0)    P_i^{ (\alpha, \beta) } (1)    \\
      &   \to  &  0,     ~~~~~~~  \mbox{as} ~  n_1, n_2 \to \infty,
            \end{eqnarray*}
              where the second equality follows from   the independent assumption between $\mathbf{U}$ and $\{\, \mathbf{V}_n (t)\colon  n \in \mathbb{N}_0\, \}$,  and  the third  one from Lemma~\ref{lem:2}.
 Applying Lemma~\ref{lem:2} we obtain the mean and covariance matrix functions of
  $\{\, \mathbf{Z} (\mathbf{x}; t)\colon \mathbf{x} \in \mathbb{M}^d, t \in \mathbb{T}\, \}$,  under the independent assumption among $\mathbf{U}$ and $\{\, \mathbf{V}_n (t)\colon n \in \mathbb{N}_0\, \}$,
     $$ \mathsf{E} [\mathbf{Z} (\mathbf{x}; t )] = \sum_{n=0}^\infty     \mathsf{E} [\mathbf{V}_n  (t)] \mathsf{E} \left[P_n^{ (\alpha, \beta) } (\cos  \rho( \mathbf{x}, \mathbf{U}))\right]  = \mathbf{0},
        ~~~ \mathbf{x} \in \mathbb{M}^d,  t \in \mathbb{T}, $$
   and
\[
\begin{aligned}
\cov ( \mathbf{Z} (\mathbf{x}_1; t_1), \mathbf{Z} ( \mathbf{x}_2; t_2) ) &=
\cov \left(   \sum_{i=0}^\infty   \mathbf{V}_i (t_1)   P_i^{ (\alpha, \beta) } (\cos  \rho( \mathbf{x}_1, \mathbf{U})),
                                     ~ \sum_{j=0}^\infty   \mathbf{V}_j (t_2)  P_j^{ (\alpha, \beta) } (\cos  \rho( \mathbf{x}_2, \mathbf{U}))  \right)  \\
&=\sum_{i=0}^\infty  \sum_{j=0}^\infty    \mathsf{E} [ \mathbf{V}_i (t_1) \mathbf{V}^{\top}_j (t_2) ]
                        \mathsf{E} \left[
                                           P_i^{ (\alpha, \beta) } ( \cos  \rho( \mathbf{x}_1, \mathbf{U})) P_j^{ (\alpha, \beta) } (\cos  \rho( \mathbf{x}_2, \mathbf{U})) \right]   \\
&=\sum_{n=0}^\infty     \mathsf{B}_n  (t_1-t_2)\frac{1}{a^2_n}
                                           P_n^{ (\alpha, \beta) } ( \cos \rho (\mathbf{x}_1,  \mathbf{x}_2) ),  ~~~~~~~~~~ \mathbf{x}_1, \mathbf{x}_2 \in \mathbb{M}^d, ~ t_1, t_2 \in \mathbb{T}.
\end{aligned}
\]
                    The latter is obviously isotropic and continuous on $\mathbb{M}^d$ and stationary on $\mathbb{T}$. 
\end{proof}
}

\bibliographystyle{plain}      
\bibliography{\jobname}   

\end{document}